\newtheorem{theorem}{Theorem}[section]
\newtheorem{lemma}[theorem]{Lemma}
\newtheorem{proposition}[theorem]{Proposition}
\newtheorem{corollary}[theorem]{Corollary}
\newtheorem{definition}{Definition}[section]
\theoremstyle{remark}
\newtheorem{remark}[theorem]{Remark}
\theoremstyle{definition}
\numberwithin{equation}{section}
\newcommand{\R}{\ensuremath{\mathbb{R}}}
\newcommand{\N}{\ensuremath{\mathbb{N}}}
\newcommand{\Z}{\ensuremath{\mathbb{Z}}}
\newcommand{\Div}{\mbox{div}}
\newcommand{\plap}{\ensuremath{\Delta_p}}
\newcommand{\dd}{\,\mathrm{d}}
\DeclareMathOperator{\supp}{supp}
\newcommand{\Grid}{\mathcal{G}_h}
\begin{document}

\title[Finite differences for the parabolic $p$-Laplacian]{Finite difference schemes for the parabolic $p$-Laplace equation}

\author[F.~del~Teso]{F\'elix del Teso}

\address[F. del Teso]{Departamento de Matematicas, Universidad Aut\'onoma de Madrid (UAM), Campus de Cantoblanco, 28049 Madrid, Spain} 

\email[]{felix.delteso\@@{}uam.es}

\urladdr{https://sites.google.com/view/felixdelteso}

\keywords{$p$-Laplacian, mean value property, viscosity solutions, finite differences, explicit scheme.}

\author[E.~Lindgren]{Erik Lindgren}

\address[E. Lindgren]{Department of Mathematics, Uppsala University, Box 480, 751 06 Uppsala, Sweden}
\email[]{erik.lindgren\@@{}math.uu.se}
\urladdr{https://sites.google.com/view/eriklindgren}

\address[]{}
\email[]{}
\urladdr{}

\subjclass[2020]{
35K10, 
35K55, 
35K92, 
35K67, 
 35D40, 
 35B05, 
 49L20. 
 }

\begin{abstract}\noindent   
We propose a new finite difference scheme for the degenerate parabolic equation
\[
\partial_t u - \Div(|\nabla u|^{p-2}\nabla u) =f, \quad p\geq 2.
\]
Under the assumption that the data  is   H\"older continuous, we establish the convergence of the explicit-in-time scheme for the Cauchy problem  provided   a suitable stability type CFL-condition. An important advantage of our approach, is that the CFL-condition makes use of the regularity provided by the scheme to reduce the computational cost.  In particular,  for Lipschitz data, the  CFL-condition  is of the same order as for the heat equation and  independent of  $p$. 
\end{abstract}

\maketitle

\tableofcontents 

\section{Introduction} \label{sec:intro}
Recently, a new monotone finite difference discretization of the $p$-Laplacian was introduced by the authors in \cite{dTLi22}. It is based on the mean value property presented in \cite{dTLi20, BS18}. The aim of this paper is to propose  an explicit-in-time finite difference numerical scheme  for the following Cauchy problem 
\begin{equation}\label{eq:ParabProb}
\begin{cases}
\partial_t u(x,t)-\plap u(x,t) =f(x), & x\in \R^d\times{(0,T)},\\
u(x,0)=u_0(x), & x \in \R^d,
\end{cases}
\end{equation}
and study its convergence. Here, $p\geq 2$ and $\Delta_p$ is the $p$-Laplace operator, 
\[
\plap \psi=\Div(|\nabla \psi|^{p-2}\nabla \psi). 
\]
 The  main result is the pointwise convergence of our scheme given H\"older continuous data ($f$ and $u_0$) and a stability type  CFL-condition. See Theorem \ref{theo:main} for the precise statement and \eqref{as:CFL} for the  CFL-condition. One of the advantages of our approach is that the CFL-condition makes use of the regularity provided by the scheme. As a consequence,  for Lipschitz continuous data, the CFL-condition is  of the same order as the one for the heat equation.   In general, the order of the CFL-condition depends on $p$ and on the regularity of the data.

\subsection{Related literature}  Equation \eqref{eq:ParabProb} has attracted much attention in the last decades. We refer to \cite{Db} and \cite{DGV} for the theory for weak solutions of this equation and to \cite{JLM} for the relation between viscosity solutions and weak solutions. To the best of our knowledge,  the best regularity results known are $C^{1,\alpha}-$regularity in space for some $\alpha>0$ (see \cite[Chapter IX]{Db}) and $C^{0,1/2}-$regularity in time (see \cite[Theorem 2.3]{Bo}).

The literature regarding finite difference schemes for parabolic problems involving the $p$-Laplacian is quite scarce. One reason for that is naturally that, since the $p$-Laplacian is in divergence form, it is very well suited for methods based on finite elements, see for instance \cite{BL94,J00,DCR07,AGW04,FedPP-L} for related results.

In the stationary setting, there has been some development of finite difference methods the past 20 years. Section 1.1 in \cite{ObermanpLap} provides an accurate overview of such results, we will only mention a few. In \cite{CoLeMa17,dTMP18,FFGS13,ObermanpLap}, finite difference schemes for the $p$-Laplace equation based on the mean value formula for the \emph{normalized} $p$-Laplacian (cf. \cite{MPR12a}) are considered.  Since the  corresponding parabolic  equation for the normalized $p$-Laplacian is completely different in nature (see \cite{JuKa06,Ker11}), these methods do not seem very well suited to be used for the parabolic equation considered in this paper. In  \cite{dTLi22}, the authors of the present paper studied a monotone finite difference discretization of the $p$-Laplacian based  on the  mean value property presented in \cite{dTLi20, BS18}. We also seize the opportunity to mention \cite{Obe05}, where difference schemes for degenerate elliptic and parabolic equations (but not for equation \eqref{eq:ParabProb}) are discussed.

 It is noteworthy that, in dimension $d=1$,  the spatial derivative of a solution of \eqref{eq:ParabProb} is a solution of the  Porous Medium Equation (PME). See \cite{Vaz06, Vaz07} for a general presentation  of the PME, and \cite{IaSaVa08} for a proof of  this fact. Finite difference schemes for the PME are well known,  see \cite{DiHo84,EmSi12,Mon16,dTEnJa18,dTEnJa19}.

\section{Assumptions and main results}
In this section, we introduce a general form of finite difference discretizations of $\plap$ and the associated numerical scheme for \eqref{eq:ParabProb}. This is followed by our assumptions, the notion of solutions for \eqref{eq:ParabProb} and the formulation of our main result.

\subsection{Discretization and scheme}

In order to treat \eqref{eq:ParabProb}, we consider a general discretization of $\Delta_p$ of the form
\begin{equation}\label{eq:GenDisc}
D^h_p\psi(x)=\sum_{y_\beta\in \Grid} J_p(\psi(x+y_\beta)-\psi(x)) \omega_{\beta},
\end{equation}
where 
$$J_p(\xi)=|\xi|^{p-2}\xi,\quad \xi \in \R,\quad  \Grid:=h\Z^d=\{y_\beta:= h \beta \, : \, \beta \in \Z^d\}$$
 and $\omega_\beta$ are certain weights $\omega_\beta=\omega_\beta(h)$ satisfying $\omega_\beta=\omega_{-\beta}\geq0$.

We also need to introduce a time discretization. We will employ an explicit and uniform-in-time discretization. Let $N\in \N$ and consider a discretization parameter $\tau>0$ given by
$\tau =T/N$. Consider also the sequence of times $\{t_j\}_{j=0}^{N}$ defined by $t_0=0$ and $t_j=t_{j-1}+ \tau= j\tau$. The time grid, $\mathcal{T}_\tau$, is given by
\[
\mathcal{T}_{\tau}= \bigcup_{j=0}^N \{t_j\}.
\]
Then, our general form of an explicit finite difference scheme of \eqref{eq:ParabProb} is given by 
\begin{equation}\label{eq:numsch}
\begin{cases}
U^j_\alpha= U_\alpha^{j-1} +\tau\left(D_p^h U_{\alpha}^{j-1}+f_\alpha\right), & \alpha \in \Z^d,\,  j=1,\ldots,N,\\
U^0_\alpha=(u_0)_\alpha & \alpha \in \Z^d,
\end{cases}
\end{equation}
where $f_\alpha:=f(x_\alpha)$, $(u_0)_\alpha=u_0(x_\alpha)$ and $D_p^h$ is given by \eqref{eq:GenDisc}. 

\subsection{Assumptions}

In order to ensure convergence of the scheme \eqref{eq:numsch}, we impose the following hypotheses on the data and the discretization parameters. This entails a regularity assumption on the data, some assumptions on the discretization and a nonlinear CFL-condition on the parameters, as is customary for explicit schemes.

\medskip
\noindent\textbf{Hypothesis on the data. } We assume that
\begin{equation}\label{as:u0f}\tag{$\textup{A}_{u_0,f}$}
\textup{$u_0, f:\R^d \to \R$ are bounded and globally H\"older continuous functions for some $a\in(0,1]$.
}
\end{equation}
More precisely, 
\[
|u_0(x)-u_0(y)|\leq L_{u_0}|x-y|^a  \quad \textup{and} \quad |f(x)-f(y)|\leq L_{f}|x-y|^a, \qquad \textup{for all} \quad x,y\in \R^d,
\]
for some constants $L_{u_0},L_{f}\geq0$. Sometimes we will write 
$\Lambda_{u_0}(\delta):=L_{u_0}\delta^a$ and $\Lambda_{f}(\delta):=L_{f}\delta^a
$
to simplify the presentation.

\medskip
\noindent\textbf{Hypothesis on the spatial discretization.}
 For the discretization, we assume the following type of monotonicity and boundedness:
\begin{equation}\label{as:disc}\tag{$\textup{A}_\omega$}
 \textup{$\omega_\beta=\omega_{-\beta}\geq0$, $w_\beta=0$ for $y_\beta \not\in B_r$ for some $r>0$, and $\sum_{y_\beta\in \Grid}\omega_\beta\leq  M r^{-p}$ } 
 \end{equation}
   Here  $M=M(p,d)>0$. 
In addition, we assume the following consistency for the discretization:
\begin{equation}\label{as:cons}\tag{$\textup{A}_{c}$}
\textup{For $\psi \in C^2_b(\R^d\times[0,T])$, we have that $D^h_p\psi=\plap \psi+o_h(1)$  as $h\to0^+$ uniformly in $(x,t)$.}
\end{equation}

Examples of discretizations satisfying these properties can be found in Section \ref{sec:discretizations}.

\medskip
\noindent\textbf{Hypothesis on the discretization parameters.}
We assume the following stability condition on the numerical parameters:
 \begin{equation}\label{as:CFL}\tag{$\textup{CFL}$}
h=o_r(1) \quad \textup{and} \quad  \tau\leq  C r^{2+(1-a)(p-2)}
\end{equation}
with
\[
C=\min\left\{1, \frac{1}{M (p-1)\left(L_{u_0}+TL_f +3\tilde{K}+1\right)^{p-2}}\right\}
\]
 and $\tilde{K}$ a constant given in \eqref{eq:ctefam}, depending on $p$, the modulus of continuity in time of the discretized solution and some universal constants coming from a mollifier.

 \begin{remark} For Lipschitz data $u_0$ and $f$,  the condition \eqref{as:CFL} reads $\tau \leq{Cr^2}$ for a certain constant $C=C(u_0,f,d,p,T)>0$.  We note that, regardless of the constant $C$, the relation between $\tau$ and $r$ is always quadratic (as in the linear case $p=2$) and independent of $p$. It is important to mention that this is computationally very relevant, especially if we want to deal with problems related to large $p$.
\end{remark}

\subsection{Main result} We  now  state our main result regarding the convergence of the scheme.  Several other properties of the scheme are also obtained, but we will state them later. 

\begin{theorem}\label{theo:main}
Let $p\in[2,\infty)$  and assume \eqref{as:u0f} and \eqref{as:disc}. Then
 for every $h,\tau>0$, there exists a unique solution $U\in \ell^\infty(\Grid\times \mathcal{T}_\tau)$ of \eqref{eq:numsch}.
If in addition, \eqref{as:CFL} and \eqref{as:cons} hold, then  
\[
\max_{(x_\alpha,t_j)\in \Grid \times \mathcal{T}_\tau}|U_\alpha^j- u(x_\alpha,t_j)|\to 0 \quad \textup{as} \quad h\to0^+,
\]
where $u$ is the unique viscosity solution of \eqref{eq:ParabProb}.
\end{theorem}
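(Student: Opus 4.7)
The plan is to follow the Barles–Souganidis monotone-scheme strategy (stability + consistency + monotonicity + comparison), with the bulk of the work concentrated on discrete H\"older regularity of $U$ in space and time, which is precisely what makes the CFL condition self-consistent.

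First I would handle existence, uniqueness, and the $\ell^\infty$ bound. Because the weights $\omega_\beta$ are finitely supported by \eqref{as:disc}, the update in \eqref{eq:numsch} is a well-defined finite sum for every $\alpha$, so iterating from $U^0$ yields existence and uniqueness of $U\in\ell^\infty(\Grid\times\mathcal{T}_\tau)$. Constants are annihilated by $D^h_p$, so by the monotonicity of $J_p$ and the sign condition on $\omega_\beta$, a discrete comparison principle gives $\|U^j\|_{\ell^\infty(\Grid)}\leq\|u_0\|_\infty+T\|f\|_\infty$.

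Second, I would transfer the H\"older regularity of the data to $U$. Spatial regularity is immediate from translation invariance: if $\tilde U^j_\alpha:=U^j(x_\alpha+z)$, the same discrete comparison principle applied to $\tilde U-U$ against the barrier $(L_{u_0}+t_jL_f)|z|^a$ yields $|U^j_{\alpha+\beta}-U^j_\alpha|\leq(L_{u_0}+TL_f)|h\beta|^a$ for every $j$. Temporal regularity is more delicate: a per-step bound $|U^j_\alpha-U^{j-1}_\alpha|\leq\tau|D^h_p U^{j-1}_\alpha|+\tau\|f\|_\infty$ combined with the spatial modulus and \eqref{as:disc} produces a control of order $\tau\cdot Mr^{-p}\cdot((L_{u_0}+TL_f)r^a)^{p-1}$, which under \eqref{as:CFL} becomes $O(r^a)$ per step. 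To upgrade this to a uniform-in-$h$ time modulus (since a naive summation over $N=T/\tau$ steps blows up), I would mollify $U$ in time at some scale $\delta$, use the scheme together with the spatial H\"older estimate to bound $\partial_t U_\delta$, and interpolate; this is the role of the constant $\tilde K$ in \eqref{eq:ctefam}. The CFL is then calibrated so that the resulting circular inequality closes by a fixed-point argument.

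Third, with uniform boundedness and spatio-temporal equicontinuity of the piecewise-constant extension of $U$ to $\R^d\times[0,T]$ in hand, I would pass to the limit through the half-relaxed limits $\overline u=\limsup^*U_{h,\tau}$ and $\underline u=\liminf_*U_{h,\tau}$. At a strict maximum of $\overline u-\varphi$ with $\varphi\in C^2_b$, the monotonicity of the scheme allows $U_{h,\tau}$ to be replaced by $\varphi$ inside $D^h_p$, and \eqref{as:cons} then forces $\partial_t\varphi-\plap\varphi-f\leq o_h(1)$; hence $\overline u$ is a viscosity subsolution of \eqref{eq:ParabProb}, and analogously $\underline u$ is a supersolution. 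The uniform equicontinuity guarantees $\overline u(\cdot,0)=\underline u(\cdot,0)=u_0$, and the comparison principle for viscosity solutions of the parabolic $p$-Laplacian (valid for $p\geq 2$, cf.\ \cite{JLM}) gives $\overline u\leq u\leq\underline u\leq\overline u$; together with the uniform spatial H\"older bound this upgrades to the uniform-on-the-lattice convergence stated in the theorem.

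The main obstacle will be the circularity in the time-regularity step: the CFL constant is prescribed through $\tilde K$, which is itself a bound on the very time modulus of $U$ one is trying to establish. The resolution, via the mollification estimate just described, has to be carried out carefully because the degeneracy of $\plap$ at $\nabla u=0$ rules out any $L^\infty$-derivative control and forces all estimates to be phrased purely in terms of H\"older moduli of discrete differences, with the $p$-homogeneity producing the precise exponent $2+(1-a)(p-2)$ appearing in \eqref{as:CFL}.
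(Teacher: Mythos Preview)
Your overall strategy (discrete regularity $\Rightarrow$ compactness $\Rightarrow$ consistency $\Rightarrow$ identification of the limit) matches the paper's, and your treatment of existence, spatial H\"older regularity via translation invariance, and the viscosity-limit step are all fine. The paper uses Arzel\`a--Ascoli plus a direct subsolution/supersolution verification rather than half-relaxed limits, but that difference is cosmetic once equicontinuity is in hand.

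The real problem is your time-regularity step. Mollifying $U$ \emph{in time} at scale $\delta$ cannot help: $\partial_t U_\delta$ is still controlled by $\|\partial_t U\|_\infty$, which is exactly the bad per-step quantity $\tau^{-1}\|U^{j+1}-U^j\|_\infty$ that blows up as $h\to0$. Temporal mollification averages but does not improve an $L^\infty$ bound. What the paper does instead---and what makes the argument close---is to mollify the \emph{initial datum} $u_0$ \emph{in space} at scale $\delta$, obtaining $u_{0,\delta}\in C^2_b$. For the scheme with initial datum $u_{0,\delta}$, the symmetry of the weights and a second-order Taylor expansion (Lemmas~\ref{lem:a1}--\ref{lem:a2}) give a bound on $D_p^h u_{0,\delta}$ that is \emph{uniform in $h$} but blows up like $\delta^{(a-1)(p-2)+(a-2)}$; this propagates to a genuine Lipschitz-in-time bound on $U_\delta$ via time-translation invariance and continuous dependence (Proposition~\ref{prop:contdep}). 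One then writes $\|U^{j+k}-U^j\|\le 2\|u_0-u_{0,\delta}\|_\infty+\|U_\delta^{j+k}-U_\delta^j\|$ and optimizes over $\delta$, which produces the exponent $a/(2+(1-a)(p-2))$.

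This also dissolves the ``circularity'' you flag: looking at \eqref{eq:ctefam}, $\tilde K$ depends only on $p$, $a$, $L_{u_0}$, $M$, and the mollifier constants $K_1,K_2$---all known \emph{a priori} from the data and the spatial mollifier---so no fixed-point argument is needed. (The paper's prose description of $\tilde K$ just below \eqref{as:CFL} is admittedly misleading on this point.) One minor additional remark: your discrete comparison principle for the $\ell^\infty$ bound already needs the CFL condition, since the explicit scheme is monotone only when $\tau(p-1)\sum_\beta|\eta_\beta|^{p-2}\omega_\beta\le 1$; the paper's Proposition~\ref{prop:stab} makes this dependence explicit.
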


\subsection{Viscosity solutions} Throughout the paper, we will use the notion of viscosity solutions. For completeness, we define the concept of viscosity solutions of \eqref{eq:ParabProb}, adopting the definition in \cite{JLM}.

\begin{definition}
Assume \eqref{as:u0f}. We say that a bounded lower (\textup{resp.} upper) semicontinuous function $u$ in $\R^d\times[0,T]$ is a \textup{viscosity supersolution} (\textup{resp.} \textup{subsolution}) of \eqref{eq:ParabProb} if 
\begin{enumerate}[(a)]
\item $u(x,0)\geq u_0(x)$  (resp. $u(x,0)\leq u_0(x)$);
\item whenever $(x_0,t_0)\in \R^d\times (0,T)$ and $\varphi\in C^2_b(B_R(x_0)\times(t_0-R,t_0+R))$ for some $R>0$ are such that $\varphi(x_0,t_0)=u(x_0,t_0)$ and $\varphi(x,t)< u(x,t)$  (\text{resp.} $\varphi(x,t)> u(x,t)$) for  $(x,t) \in  B_R(x_0)\times(t_0-R,t_0) $, then we have
\[
\varphi_t(x_0,t_0)-\Delta \varphi(x_0,t_0)\geq f(x_0) \quad (\text{resp.} \quad \varphi_t(x_0,t_0)-\Delta \varphi(x_0,t_0)\leq f(x_0) ).
\]
\end{enumerate}
A \textup{viscosity solution} of \eqref{eq:ParabProb} is a bounded continuous function $u$ being both a viscosity supersolution and a viscosity subsolution \eqref{eq:ParabProb}.
\end{definition}

\begin{remark} We remark that it is not necessary to require strict inequality in the definition above. It is enough to require $\varphi(x,t)\leq u(x,t)$  (\text{resp.} $\varphi(x,t)\geq  u(x,t)$) for  $(x,t) \in  B_R(x_0)\times(t_0-R,t_0) $.
\end{remark}

We also state a necessary uniqueness result that will ensure convergence of the scheme. Without such a result, we would only be able to establish convergence up to a subsequence.  The theorem below is a consequence of the fact that viscosity solutions are weak solutions (see Corollary 4.7 in \cite{JLM}) and that bounded weak solutions are unique (see Theorem 6.1 in \cite{Db}).

\begin{theorem}\label{teo:main2} Assume \eqref{as:u0f}.  Then there is a unique solution of  \eqref{eq:ParabProb}.
\end{theorem}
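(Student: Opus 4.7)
The plan is to bridge the viscosity and weak solution theories, leveraging the two references mentioned in the statement. Since both cited results are used as black boxes, the work essentially amounts to assembling them correctly and filling the gap between a bounded domain formulation and the Cauchy problem on $\R^d$.

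For existence, I would invoke the classical theory for weak solutions of the parabolic $p$-Laplace equation. Under \eqref{as:u0f}, the data $u_0$ and $f$ are bounded and continuous, so standard results (e.g.\ Chapter III of \cite{Db}, or a truncation/exhaustion procedure on an increasing sequence of balls $B_{R_n}$ with zero-boundary data, passing to the limit using interior estimates and a diagonal argument) produce a bounded weak solution $u$ of \eqref{eq:ParabProb} on $\R^d \times (0,T)$. The interior $C^{1,\alpha}$ spatial regularity together with $C^{0,1/2}$ regularity in time guarantees that $u$ is continuous, so one can then invoke the weak-to-viscosity direction of Corollary~4.7 in \cite{JLM} to conclude that $u$ is a viscosity solution.

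For uniqueness, I would let $u_1$ and $u_2$ be two bounded viscosity solutions of \eqref{eq:ParabProb}. By the viscosity-to-weak direction of Corollary~4.7 in \cite{JLM}, each $u_i$ is a weak solution. Both are bounded by definition, and both attain the same initial datum $u_0$ (pointwise and hence in the sense required by the weak formulation, using the continuity at $t=0$). Applying the uniqueness of bounded weak solutions (Theorem~6.1 in \cite{Db}) yields $u_1 = u_2$.

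The main technical obstacle is that the equivalence in \cite{JLM} and the uniqueness in \cite{Db} are typically formulated on bounded cylinders with prescribed lateral boundary data, whereas \eqref{eq:ParabProb} is posed on the whole space. To handle this, I would localize: given two solutions, work on a family $B_R \times (0,T)$ and compare them against a common weak solution with the same initial data and with lateral data given, say, by one of the $u_i$'s (which is continuous on the parabolic boundary). Letting $R \to \infty$ and using that $u_1$, $u_2$ are both bounded (so their difference is globally controlled), any potential contribution from the lateral boundary vanishes in the limit, reducing the global uniqueness to the cited bounded-domain uniqueness. Once this localization is handled, the rest is just bookkeeping of the two cited results.
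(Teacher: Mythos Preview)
Your proposal is correct and follows exactly the route the paper indicates: the paper does not give a proof at all beyond the one-line remark preceding the theorem, namely that the result follows from the viscosity-to-weak equivalence of Corollary~4.7 in \cite{JLM} combined with the uniqueness of bounded weak solutions from Theorem~6.1 in \cite{Db}. Your write-up is in fact more careful than the paper's own justification, since you discuss existence explicitly and flag the bounded-domain versus whole-space issue, neither of which the paper addresses.
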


\section{Properties of the numerical scheme}

In this section we will study properties of the numerical scheme \eqref{eq:numsch}. More precisely, we establish existence and uniqueness for the numerical solution, stability in maximum norm, as well as conservation of the modulus of continuity of the data. 

\subsection{Existence and uniqueness}
 We have the following existence and uniqueness result for the numerical scheme.
 \begin{proposition}
 Assume \eqref{as:u0f}, \eqref{as:disc}, $p\geq2$ and $r,h,\tau>0$. Then there exists a unique solution $U\in \ell^\infty(\Grid \times \mathcal{T}_{\tau})$ of the scheme \eqref{eq:numsch}. 
 \end{proposition}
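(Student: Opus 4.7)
The scheme \eqref{eq:numsch} is fully explicit: given $U^{j-1}$, each value $U^j_\alpha$ is prescribed by a direct formula, so pointwise existence and uniqueness of the iterates $U^0,\dots,U^N$ follow immediately by induction on $j$. The only nontrivial content of the proposition is that $U^j$ belongs to $\ell^\infty(\Grid)$ at every time level, and this is what I would prove by a second induction.

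At $j=0$ the bound $\|U^0\|_{\ell^\infty(\Grid)} \leq \|u_0\|_{L^\infty(\R^d)}$ is immediate from \eqref{as:u0f}. For the induction step, assuming $\|U^{j-1}\|_{\ell^\infty(\Grid)} \leq K_{j-1} < \infty$, I would exploit \eqref{as:disc}: the sum in the definition of $D_p^h$ involves only the finitely many indices with $y_\beta \in B_r$, and pointwise
$$|D_p^h U^{j-1}_\alpha| \;\leq\; \sum_{y_\beta\in \Grid} |U^{j-1}_{\alpha+\beta} - U^{j-1}_\alpha|^{p-1}\omega_\beta \;\leq\; (2K_{j-1})^{p-1}\sum_{y_\beta\in \Grid}\omega_\beta \;\leq\; M(2K_{j-1})^{p-1}r^{-p}.$$
Combined with $f \in L^\infty$ (again from \eqref{as:u0f}), the scheme then gives
$$\|U^j\|_{\ell^\infty(\Grid)} \;\leq\; K_{j-1} + \tau M(2K_{j-1})^{p-1}r^{-p} + \tau\|f\|_{L^\infty} \;=:\; K_j \;<\;\infty.$$
Iterating over the finite range $j=1,\dots,N$ yields $U \in \ell^\infty(\Grid \times \mathcal{T}_\tau)$, which is all that is claimed.

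Since there is no implicit equation to invert and the support of $\{\omega_\beta\}$ consists of finitely many grid points, no fixed-point or monotonicity argument is needed, and no real obstacle arises here. I should emphasize, though, that the crude constants $K_j$ produced by this argument blow up (super-)exponentially in $j$ because of the nonlinearity of $J_p$, and that they depend badly on $r$ and $\tau$; consequently this proposition is only a well-posedness statement, not a genuine stability estimate. A uniform-in-$j$ (and in $r,\tau$) $\ell^\infty$-bound, which is what is actually required en route to Theorem \ref{theo:main}, will surely be established in a separate result using a discrete maximum/comparison principle together with the CFL condition \eqref{as:CFL}.
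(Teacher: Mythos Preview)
Your argument is correct and coincides with the paper's own proof: both rely on the explicitness of the scheme for pointwise uniqueness and then establish $\ell^\infty$-boundedness by induction on $j$, using the crude estimate $|D_p^h U^{j-1}_\alpha|\leq (2\|U^{j-1}\|_{\ell^\infty})^{p-1}\sum_\beta \omega_\beta$. Your closing remark is also accurate: the uniform-in-$j$ stability bound is indeed proved separately in the paper (Proposition~\ref{prop:stab}) under the CFL condition.
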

 \begin{proof}
 First we note that, for a function $\psi\in \ell^\infty(\Grid)$, we have that
 \[
| D^h_p\psi_\alpha|\leq \sum_{y_\beta\in \Grid} J_p(\psi(x_\alpha+y_\beta)-\psi(x_\alpha)) \omega_{\beta} \leq (2\|\psi\|_{\ell^\infty(\Grid)})^{p-1}\sum_{y_\beta\in \Grid}\omega_{\beta}<+\infty.
 \]
Then, for each $\alpha\in \Z$, $U^j_\alpha$ is defined recursively using the values of $U^{j-1}_\beta$ for $\beta\in \Z$, and we have that
\[
\sup_{y_\alpha\in \Grid}|U^j_\alpha|= \sup_{y_\alpha\in \Grid}|U^{j-1}_\alpha| + \tau \left(\left(2\sup_{y_\alpha\in \Grid}|U^{j-1}_\alpha|\right)^{p-1} \sum_{y_\beta\in \Grid}\omega_{\beta} + \sup_{y_\alpha\in \Grid}|f_\alpha|\right).
\]
The conclusion follows since
\[
\sup_{y_\alpha\in \Grid}|f_\alpha|\leq \|f\|_{L^\infty(\R^d)} \quad \textup{and} \quad  \sup_{y_\alpha\in \Grid}|U^0_\alpha|=\sup_{y_\alpha\in \Grid}|u_0(y_\alpha)|\leq \|u_0\|_{L^\infty(\R^d)}.
\]
 \end{proof}

 \subsection{Stability and preservation of the modulus of continuity in space}
 First we will prove that the scheme preserves the regularity of the data.
 \begin{proposition}\label{prop:regusch} Assume \eqref{as:u0f},  \eqref{as:disc}, $p\geq2$,   $r, h, \tau >0$ and \eqref{as:CFL}. Let $U$ be the solution of \eqref{eq:numsch}. For every $j=0,\ldots,N$, we have
 \[
 |U^j_\alpha-U^j_\gamma| \leq \Lambda_{u_0}(|x_\alpha-x_\gamma|) + t_j \Lambda_f(|x_\alpha-x_\gamma|), \quad \textup{for all} \quad x_\alpha,x_\gamma\in \Grid.
 \]
 \end{proposition}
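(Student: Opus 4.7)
Induct on $j$, with the base case $j=0$ following directly from the Hölder bound on $u_0$ in \eqref{as:u0f}. For the inductive step, fix $\alpha,\gamma\in\Z^d$ and set $\eta:=\gamma-\alpha$ so that $x_\gamma-x_\alpha=h\eta$. Introduce the shifted solution $V^j_\beta:=U^j_{\beta+\eta}$, which by translation invariance of $D^h_p$ satisfies \eqref{eq:numsch} with initial data $u_0(\cdot+h\eta)$ and source $f(\cdot+h\eta)$. Also introduce the barrier $W^j_\beta:=U^j_\beta+\Lambda_{u_0}(h|\eta|)+t_j\Lambda_f(h|\eta|)$. Since additive constants are annihilated by $D^h_p$, the barrier $W$ satisfies \eqref{eq:numsch} with source $f+\Lambda_f(h|\eta|)$ and initial data $u_0+\Lambda_{u_0}(h|\eta|)$. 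Hypothesis \eqref{as:u0f} gives $W^0\geq V^0$ pointwise, and the source of $W$ dominates that of $V$. If we can prove $W^j\geq V^j$ pointwise for every $j$, then
$$
U^j_\gamma-U^j_\alpha \;=\; V^j_\alpha - U^j_\alpha \;\leq\; \Lambda_{u_0}(h|\eta|)+t_j\Lambda_f(h|\eta|),
$$
and a symmetric choice of barrier gives the matching lower bound, proving the estimate.

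To prove $W^j\geq V^j$ by induction in $j$, set $Z^{j-1}:=W^{j-1}-V^{j-1}\geq 0$ and apply the mean value theorem to $J_p$ inside each term of $D^h_p W^{j-1}-D^h_pV^{j-1}$. This yields the identity
$$
Z^j_\beta \;=\; Z^{j-1}_\beta\Bigl[1-\tau(p-1)\sum_{\beta'}|\xi_{\beta'}|^{p-2}\omega_{\beta'}\Bigr] \;+\; \tau(p-1)\sum_{\beta'}|\xi_{\beta'}|^{p-2}Z^{j-1}_{\beta+\beta'}\omega_{\beta'} \;+\; \tau\bigl[\Lambda_f(h|\eta|)-(f_{\beta+\eta}-f_\beta)\bigr],
$$
where $\xi_{\beta'}$ lies between the increments $W^{j-1}_{\beta+\beta'}-W^{j-1}_\beta$ and $V^{j-1}_{\beta+\beta'}-V^{j-1}_\beta$. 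The last bracket is nonnegative by the Hölder assumption on $f$, and all $Z^{j-1}_{\bullet}$ are nonnegative; the remaining task is to verify that the bracket multiplying $Z^{j-1}_\beta$ is nonnegative. This is the standard monotonicity condition for the explicit scheme.

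To verify it, notice that both differences defining $\xi_{\beta'}$ are increments of $U^{j-1}$ at spatial separation $|h\beta'|$, so the outer induction hypothesis (applied at step $j-1$) yields
$$
|\xi_{\beta'}| \;\leq\; \Lambda_{u_0}(|h\beta'|)+t_{j-1}\Lambda_f(|h\beta'|) \;\leq\; (L_{u_0}+TL_f)\,r^{a},
$$
where we used that $\omega_{\beta'}=0$ for $|h\beta'|>r$ by \eqref{as:disc}. Combined with $\sum_{\beta'}\omega_{\beta'}\leq Mr^{-p}$, this gives
$$
\tau(p-1)\sum_{\beta'}|\xi_{\beta'}|^{p-2}\omega_{\beta'} \;\leq\; \tau\,M(p-1)(L_{u_0}+TL_f)^{p-2}\,r^{a(p-2)-p}.
$$
The exponent $2+(1-a)(p-2)$ appearing in \eqref{as:CFL} is exactly $p-a(p-2)$, so the right-hand side reduces to $CM(p-1)(L_{u_0}+TL_f)^{p-2}$, which is at most $1$ by the explicit choice of $C$ in \eqref{as:CFL}. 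The bracket is therefore nonnegative and the induction closes.

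\textbf{Main obstacle.} The only subtle point is the self-referential nature of the CFL verification: the bound on $|\xi_{\beta'}|^{p-2}$ uses the very spatial modulus of continuity we are trying to propagate. The induction must be arranged so that the modulus-of-continuity estimate at step $j-1$ supplies the uniform control on increments needed to verify the scheme's monotonicity at step $j$. The precise form of the exponent $2+(1-a)(p-2)$, together with the dependence of $C$ on $L_{u_0}+TL_f$ in \eqref{as:CFL}, is exactly what makes this bootstrap close.
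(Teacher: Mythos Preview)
Your proof is correct and follows essentially the same approach as the paper: induction on $j$, mean value theorem applied to $J_p$, and use of the inductive hypothesis at step $j-1$ to bound the intermediate values $|\xi_{\beta'}|$ so that the \eqref{as:CFL} condition makes the coefficient of the diagonal term lie in $[0,1]$. The only difference is packaging: you phrase the argument as a one-sided comparison between the shifted solution $V$ and the barrier $W$ (with a symmetric argument for the other inequality), whereas the paper works directly with $U^{j+1}_\alpha-U^{j+1}_\gamma$ and takes absolute values at the end, but the underlying computation---in particular your identity for $Z^j_\beta$---is exactly the paper's identity \eqref{eq:pres1}.
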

 
 \begin{remark}
 In particular, if both $u_0$ and $f$ are Lipschitz functions with constants $L_{u_0}$ and $L_f$ respectively, the above result reads,
 \[
  |U^j_\alpha-U^j_\gamma| \leq (L_{u_0}+t_j L_f)|x_\alpha-x_\gamma|.
 \]
 \end{remark}
 \begin{proof}[Proof of Proposition \ref{prop:regusch}]
By assumption \eqref{as:u0f}, for any given $x_\alpha,x_\gamma\in \Grid$, we have that
\[
|U^0_\alpha-U^0_\gamma|= |u_0(x_\alpha)-u_0(x_\gamma)|\leq \Lambda_{u_0}(|x_\alpha-x_\gamma|).
\]
Assume by induction that
\[
|U^{j}_\alpha-U^{j}_\gamma|\leq \Lambda_{u_0}(|x_\alpha-x_\gamma|) + t_j \Lambda_f(|x_\alpha-x_\gamma|).
\]
Using the scheme at $x_\alpha$ and $x_\gamma$ we get
\[
U^{j+1}_\alpha-U^{j+1}_\gamma= U^{j}_\alpha-U^{j}_\gamma+\tau \sum_{y_\beta\in \Grid} \left(J_p( U^{j}_{\alpha+\beta}-U^{j}_{\alpha}) - J_p( U^{j}_{\gamma+\beta}-U^{j}_{\gamma})\right) \omega_\beta + \tau (f_\alpha-f_\gamma).
\]
Now, since $p\geq2$, we have, by Taylor expansion, that
\[
J_p( U^{j}_{\alpha+\beta}-U^{j}_{\alpha}) - J_p( U^{j}_{\gamma+\beta}-U^{j}_{\gamma})=(p-1)|\eta_\beta|^{p-2} \left((U^{j}_{\alpha+\beta}-U^{j}_{\gamma+\beta})-(U^{j}_{\alpha}-U^{j}_{\gamma})\right),
\]
for some $\eta_\beta\in \R$ between $(U^{j}_{\alpha+\beta}-U^{j}_{\alpha})$ and $(U^{j}_{\gamma+\beta}-U^{j}_{\gamma})$. Thus,
\begin{equation}\label{eq:pres1}
\begin{split}
U^{j+1}_\alpha-U^{j+1}_\gamma=& (U^{j}_\alpha-U^{j}_\gamma)\left(1-\tau (p-1) \sum_{y_\beta\in \Grid} |\eta_\beta|^{p-2} \omega_\beta \right)\\
&+\tau (p-1) \sum_{y_\beta\in \Grid} |\eta_\beta|^{p-2} (U^{j}_{\alpha+\beta}-U^{j}_{\gamma+\beta})  \omega_\beta  + \tau (f_\alpha-f_\gamma).
\end{split}
\end{equation}
Now observe that, by the induction assumption, we have
\[
|\eta_\beta|\leq \sup_{y_\alpha \in \Grid} \{|U^{j}_{\alpha+\beta}-U^{j}_{\alpha}|\}\leq \sup_{y_\alpha \in \Grid} \{\Lambda_{u_0}(|x_{\alpha+\beta}-x_\alpha|) + t_j \Lambda_f(|x_{\alpha+\beta}-x_\alpha|)\}= \Lambda_{u_0}(| x_{\beta}|) + t_j \Lambda_f( |x_{\beta}|).
\]
By \eqref{as:disc}, we have $w_\beta=0$ for $y_\beta \not\in B_r$ for some $r>0$, and we deduce that
\[
 \sum_{y_\beta\in \Grid} |\eta_\beta|^{p-2} \omega_\beta \leq  \left(\Lambda_{u_0}(r) + t_j \Lambda_f(r)\right)^{p-2}\sum_{y_\beta\in \Grid}\omega_{\beta}\leq \frac{(L_{u_0}+t_j L_f)^{p-2}M}{ r^{2+(1-a)(p-2)}}.
\]
Thus, by \eqref{as:CFL}, we get
\[
\tau (p-1) \sum_{y_\beta\in \Grid} |\eta_\beta|^{p-2} \omega_\beta\leq 1.
\]
Using the above estimate and the induction hypothesis in \eqref{eq:pres1},  we get that
\begin{equation*}
\begin{split}\
|U^{j+1}_\alpha-U^{j+1}_\gamma|\leq& |U^{j}_\alpha-U^{j}_\gamma|\left(1-\tau (p-1) \sum_{y_\beta\in \Grid} |\eta_\beta|^{p-2} \omega_\beta \right)\\
&+\tau (p-1) \sum_{y_\beta\in \Grid} |\eta_\beta|^{p-2} |U^{j}_{\alpha+\beta}-U^{j}_{\gamma+\beta}|  \omega_\beta  + \tau |f_\alpha-f_\gamma|\\
\leq& \left(\Lambda_{u_0}(|x_\alpha-x_\gamma|) + t_j \Lambda_f(|x_\alpha-x_\gamma|)\right) \left(1-\tau (p-1) \sum_{y_\beta\in \Grid} |\eta_\beta|^{p-2} \omega_\beta \right)\\
&+\tau (p-1) \sum_{y_\beta\in \Grid} |\eta_\beta|^{p-2} \left(\Lambda_{u_0}(|x_{\alpha+\beta}-x_{\gamma+\beta}|) + t_j \Lambda_f(|x_{\alpha+\beta}-x_{\gamma+\beta}|)\right) \omega_\beta  \\
&+ \tau \Lambda_f (|x_\alpha-x_\gamma|)\\
\leq& \left(\Lambda_{u_0}(|x_\alpha-x_\gamma|) + t_j \Lambda_f(|x_\alpha-x_\gamma|)\right) \left(1-\tau (p-1) \sum_{y_\beta\in \Grid} |\eta_\beta|^{p-2} \omega_\beta \right)\\
&+\tau (p-1) \left(\Lambda_{u_0}(|x_\alpha-x_\gamma|) + t_j \Lambda_f(|x_\alpha-x_\gamma|)\right)  \sum_{y_\beta\in \Grid} |\eta_\beta|^{p-2}  \omega_\beta  \\
&+ \tau \Lambda_f (|x_\alpha-x_\gamma|)\\
=&   \Lambda_{u_0}(|x_\alpha-x_\gamma|) + (t_j+\tau) \Lambda_f(|x_\alpha-x_\gamma|),
\end{split}
\end{equation*}
which concludes the proof. 
 \end{proof}
 
 We are now ready to state and prove the stability result: solutions with bounded data remain bounded (uniformly in the discretization parameters) for all times.\\
 \begin{proposition}\label{prop:stab}
 Under the assumptions of Proposition \ref{prop:regusch}, we have that
 \[
 \sup_{y_\alpha\in \Grid} |U^j_\alpha|\leq \|u_0\|_{L^\infty(\R^d)} + t_j  \|f\|_{L^\infty(\R^d)}, \quad \textup{for all} \quad j=0,\ldots, N.
 \]
 \end{proposition}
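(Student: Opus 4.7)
The plan is to argue by induction on $j$, using the same linearization trick as in the proof of Proposition \ref{prop:regusch}. The base case $j=0$ is immediate from $U^0_\alpha=u_0(x_\alpha)$ and $\|u_0\|_{L^\infty(\R^d)}<\infty$. For the inductive step, I would start from the scheme
\[
U^{j+1}_\alpha = U^{j}_\alpha + \tau \sum_{y_\beta\in\Grid} J_p\!\left(U^{j}_{\alpha+\beta}-U^{j}_\alpha\right)\omega_\beta + \tau f_\alpha
\]
and rewrite each difference $J_p(U^{j}_{\alpha+\beta}-U^{j}_\alpha)$ as $(p-1)|\eta_\beta|^{p-2}(U^{j}_{\alpha+\beta}-U^{j}_\alpha)$ for a suitable $\eta_\beta$ between $0$ and $U^{j}_{\alpha+\beta}-U^{j}_\alpha$, exactly as done in the previous proposition. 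This yields
\[
U^{j+1}_\alpha = U^{j}_\alpha\!\left(1-\tau(p-1)\!\sum_{y_\beta\in\Grid}|\eta_\beta|^{p-2}\omega_\beta\right) + \tau(p-1)\!\sum_{y_\beta\in\Grid}|\eta_\beta|^{p-2}U^{j}_{\alpha+\beta}\omega_\beta + \tau f_\alpha.
\]

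Next, I would verify that the coefficient multiplying $U^{j}_\alpha$ lies in $[0,1]$, so that the right-hand side is a convex combination of shifted values plus a forcing term. Using Proposition \ref{prop:regusch} on the increments $U^{j}_{\alpha+\beta}-U^{j}_\alpha$ together with \eqref{as:disc} (which localizes the stencil inside $B_r$ and bounds $\sum \omega_\beta \leq M r^{-p}$), one obtains
\[
\sum_{y_\beta\in\Grid}|\eta_\beta|^{p-2}\omega_\beta \leq \bigl(\Lambda_{u_0}(r)+t_j\Lambda_f(r)\bigr)^{p-2}\sum_{y_\beta\in\Grid}\omega_\beta \leq \frac{(L_{u_0}+t_jL_f)^{p-2}M}{r^{2+(1-a)(p-2)}},
\]
and the CFL condition \eqref{as:CFL} immediately gives $\tau(p-1)\sum|\eta_\beta|^{p-2}\omega_\beta\leq 1$.

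Taking absolute values, using the triangle inequality, and bounding $|U^{j}_{\alpha+\beta}|$ and $|U^{j}_\alpha|$ by $\sup_\gamma|U^{j}_\gamma|$, the two convex-combination terms collapse into $\sup_\gamma|U^{j}_\gamma|$, while $|f_\alpha|\leq \|f\|_{L^\infty(\R^d)}$. This gives
\[
\sup_{y_\alpha\in\Grid}|U^{j+1}_\alpha|\leq \sup_{y_\alpha\in\Grid}|U^{j}_\alpha| + \tau\|f\|_{L^\infty(\R^d)},
\]
and invoking the induction hypothesis $\sup|U^{j}_\alpha|\leq \|u_0\|_{L^\infty(\R^d)}+t_j\|f\|_{L^\infty(\R^d)}$ together with $t_{j+1}=t_j+\tau$ closes the induction.

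The main subtlety — not really a genuine obstacle — is checking that the coefficient in front of $U^{j}_\alpha$ is nonnegative, which is exactly the content of the CFL condition and already done inside the proof of Proposition \ref{prop:regusch}. Beyond that, the argument is essentially a discrete maximum principle dressed up through the Taylor linearization of $J_p$.
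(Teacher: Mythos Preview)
Your proposal is correct and follows essentially the same induction/convex-combination argument as the paper. The only cosmetic difference is that you invoke the mean value theorem on $J_p$ to produce the coefficients $(p-1)|\eta_\beta|^{p-2}$, whereas the paper simply uses the explicit identity $J_p(s)=|s|^{p-2}s$ to write the coefficients as $|U^{j}_{\alpha+\beta}-U^{j}_\alpha|^{p-2}$ directly (no linearization needed here, since only a single value of $J_p$ appears rather than a difference); the rest of the proof---bounding the increments via Proposition~\ref{prop:regusch}, invoking \eqref{as:disc} and \eqref{as:CFL}, and collapsing the convex combination---is identical.
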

 \begin{proof}
 By assumption \eqref{as:u0f}, we have that
 \[
\sup_{y_\alpha\in \Grid}|U^0_\alpha|\leq \sup_{y_\alpha\in \Grid}|u_0(x_\alpha)| \leq \|u_0\|_{L^\infty(\R^d)}.
\]
Assume by induction that
\[
\sup_{y_\alpha\in \Grid} |U^j_\alpha|\leq \|u_0\|_{L^\infty(\R^d)} + t_j  \|f\|_{L^\infty(\R^d)}.
\]
Direct computations lead to
\[
\begin{split}
U^{j+1}_\alpha&=U^{j}_\alpha + \tau \sum_{y_\beta \in \Grid} |U^{j}_{\alpha+\beta}-U^{j}_\alpha|^{p-2}(U^{j}_{\alpha+\beta}-U^{j}_\alpha)\omega_\beta+ \tau f_\alpha\\
&=U^{j}_\alpha\left(1-\tau \sum_{y_\beta \in \Grid} |U^{j}_{\alpha+\beta}-U^{j}_\alpha|^{p-2} \omega_\beta \right)  + \tau \sum_{y_\beta \in \Grid} |U^{j}_{\alpha+\beta}-U^{j}_\alpha|^{p-2}U^{j}_{\alpha+\beta}\omega_\beta+ \tau f_\alpha.
\end{split}
\]
By Proposition \ref{prop:regusch}  we have that
\[
 |U^{j}_{\alpha+\beta}-U^{j}_\alpha|^{p-2}\leq (\Lambda_{u_0}(|y_\beta|) + t_j \Lambda_f(|y_\beta|))^{p-2},
\]
which together with assumptions \eqref{as:disc} and \eqref{as:CFL}  imply that
\[
\tau \sum_{y_\beta \in \Grid} |U^{j}_{\alpha+\beta}-U^{j}_\alpha|^{p-2} \omega_\beta \leq \tau  (\Lambda_{u_0}(r) + t_j \Lambda_f(r))^{p-2}\sum_{y_\beta \in \Grid} \omega_\beta \leq \frac{1}{p-1}\leq 1.
\]
Direct computations plus the induction hypothesis allow us to conclude that
\[
\begin{split}
|U^{j+1}_\alpha|\leq&\sup_{y_\alpha\in \Grid} |U^{j}_\alpha|\left(1-\tau \sum_{y_\beta \in \Grid} |U^{j}_{\alpha+\beta}-U^{j}_\alpha|^{p-2} \omega_\beta \right) \\
 &+ \tau \sup_{y_\alpha\in \Grid} |U^{j}_\alpha| \sum_{y_\beta \in \Grid} |U^{j}_{\alpha+\beta}-U^{j}_\alpha|^{p-2}\omega_\beta+ \tau \|f\|_{L^\infty(\R^d)}\\
 =&\sup_{y_\alpha\in \Grid} |U^{j}_\alpha| + \tau \|f\|_{L^\infty(\R^d)}\\
 =& \|u_0\|_{L^\infty(\R^d)} + (t_j +\tau)  \|f\|_{L^\infty(\R^d)},
\end{split}
\]
which concludes the proof.
 \end{proof}

 \subsection{Time equicontinuity for a discrete in time scheme} 
 
 Now we extend the scheme from $\Grid$ to $\R^d$ by considering $U:\R^d\times \mathcal{T}_{\tau}$ defined by
\begin{equation}\label{eq:numsch_ext}
\begin{cases}
U^j(x)= U^{j-1}(x) + \tau \left(D_p^h U^{j-1}(x)+f(x)\right), & x \in \R^d,\,  j=1,\ldots,N,\\
U^0(x)=u_0(x) & x \in \R^d.
\end{cases}
\end{equation}
\begin{remark}
Clearly, if we restrict the solution of \eqref{eq:numsch_ext} to $\Grid$, we recover the solution of \eqref{eq:numsch}.
\end{remark}

\begin{proposition}[Continuous dependence on the data]\label{prop:contdep}
Assume \eqref{as:u0f}, \eqref{as:disc}, $p\geq2$,   $r, h,\tau >0$ and \eqref{as:CFL}. Let $U,\widetilde{U}$ be the solutions of \eqref{eq:numsch} corresponding to $u_0, \widetilde{u}_0$ and $f,\widetilde{f}$. For every $j=0,\ldots,N$, we have
 \[
\|U^j-\widetilde{U}^j\|_{L^\infty(\R^d)} \leq  \|u_0- \widetilde{u}_0\|_{L^\infty(\R^d)}  + t_j  \|f- \widetilde{f}\|_{L^\infty(\R^d)}. \]
\end{proposition}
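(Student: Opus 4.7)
The plan is to mimic the strategies used in Propositions \ref{prop:regusch} and \ref{prop:stab}: introduce $V^j(x):=U^j(x)-\widetilde{U}^j(x)$ and $g(x):=f(x)-\widetilde{f}(x)$, and prove the estimate by induction on $j$. The base case $j=0$ is immediate from $V^0=u_0-\widetilde{u}_0$. For the inductive step, I would subtract the scheme \eqref{eq:numsch_ext} for $\widetilde{U}$ from that for $U$ to obtain
\[
V^{j+1}(x)=V^j(x)+\tau\bigl[D^h_p U^j(x)-D^h_p \widetilde U^j(x)\bigr]+\tau g(x).
\]

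The key step is to linearize the nonlinear term. Applying the mean value identity $J_p(a)-J_p(b)=(p-1)|\eta|^{p-2}(a-b)$ (valid because $J_p'(\xi)=(p-1)|\xi|^{p-2}$ for $p\geq 2$) termwise, there exists for each $y_\beta\in\Grid$ some $\eta_\beta=\eta_\beta(x)$ lying between $U^j(x+y_\beta)-U^j(x)$ and $\widetilde U^j(x+y_\beta)-\widetilde U^j(x)$, so that
\[
V^{j+1}(x)=V^j(x)\Bigl(1-\tau(p-1)\!\!\sum_{y_\beta\in\Grid}\!\!|\eta_\beta|^{p-2}\omega_\beta\Bigr)+\tau(p-1)\!\!\sum_{y_\beta\in\Grid}\!\!|\eta_\beta|^{p-2}V^j(x+y_\beta)\omega_\beta+\tau g(x).
\]
This is formally the same identity as \eqref{eq:pres1}, but now for the difference of two solutions.

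Next I would control $|\eta_\beta|$ by applying Proposition \ref{prop:regusch} \emph{separately} to $U$ and to $\widetilde U$ (the CFL condition in the statement is tacitly assumed to hold for both data sets, with their own constants; one may take the max of the two pairs), giving
\[
|\eta_\beta|\leq \Lambda^{\max}_{u_0}(|y_\beta|)+t_j\Lambda^{\max}_f(|y_\beta|)\leq \bigl(L^{\max}_{u_0}+t_jL^{\max}_f\bigr)r^a\quad\text{for }y_\beta\in B_r,
\]
where the superscript $\max$ indicates the maximum of the regularity constants of the two pairs of data. Together with \eqref{as:disc} and \eqref{as:CFL}, exactly as in the proof of Proposition \ref{prop:regusch}, this yields
\[
\tau(p-1)\sum_{y_\beta\in\Grid}|\eta_\beta|^{p-2}\omega_\beta\leq 1,
\]
so the coefficient in front of $V^j(x)$ is nonnegative and the identity expresses $V^{j+1}(x)-\tau g(x)$ as a convex combination of values of $V^j$.

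Taking absolute values, bounding each $|V^j(\cdot)|$ by $\|V^j\|_{L^\infty(\R^d)}$ and combining, the two weighted terms collapse to $\|V^j\|_{L^\infty(\R^d)}$, producing
\[
\|V^{j+1}\|_{L^\infty(\R^d)}\leq \|V^j\|_{L^\infty(\R^d)}+\tau\|g\|_{L^\infty(\R^d)}.
\]
Iterating from $j=0$ and using $t_{j+1}=t_j+\tau$ gives the claimed bound. The main (and only) subtlety is the uniform bound on $|\eta_\beta|$: it is important that Proposition \ref{prop:regusch} applies to both $U$ and $\widetilde U$, which forces the implicit CFL to be compatible with the larger of the two modulus-of-continuity constants; no new analytical ingredient beyond what was already used in Propositions \ref{prop:regusch}--\ref{prop:stab} is required.
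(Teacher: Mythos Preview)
Your proposal is correct and follows essentially the same route as the paper's own proof: induction on $j$, subtracting the two schemes, applying the mean value identity for $J_p$ to obtain the analogue of \eqref{eq:pres1} (which is exactly the paper's \eqref{eq:pres2}), bounding $|\eta_\beta|$ via Proposition~\ref{prop:regusch}, and then using \eqref{as:disc} together with \eqref{as:CFL} to make the coefficient of $V^j(x)$ nonnegative so that the inductive step closes. Your observation that Proposition~\ref{prop:regusch} must apply to both $U$ and $\widetilde U$, and hence that the CFL constant should accommodate the larger of the two H\"older constants, is a genuine subtlety that the paper leaves implicit.
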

\begin{proof}
By assumption \eqref{as:u0f}, we have that
\[
\|U^0-\widetilde{U}^0\|_{L^\infty(\R^d)}= \|u_0-\widetilde{u}_0\|_{L^\infty(\R^d)}.
\]
Assume by induction that
\[
\|U^j-\widetilde{U}^j\|_{L^\infty(\R^d)}= \|u_0-\widetilde{u}_0\|_{L^\infty(\R^d)} + t_j \|f-\widetilde{f}\|_{L^\infty(\R^d)}.
\]
Similar computations as the ones in the proof of Proposition \ref{prop:regusch} yield
\begin{equation}\label{eq:pres2}
\begin{split}
U^{j+1}(x)-&\widetilde{U}^{j+1}(x)= (U^{j}(x)-\widetilde{U}^{j}(x))\left(1-\tau (p-1) \sum_{y_\beta\in \Grid} |\eta_\beta|^{p-2} \omega_\beta \right)\\
&+\tau (p-1) \sum_{y_\beta\in \Grid} |\eta_\beta|^{p-2} (U^{j}(x+y_\beta)-\widetilde{U}^{j}(x+y_\beta)) \omega_\beta  + \tau  (f(x)-\widetilde{f}(x)),
\end{split}
\end{equation}
where $\eta_\beta\in \R$ is some number between $(U^{j}(x+y_\beta)-U^{j}(x))$ and $(\widetilde{U}^{j}(x+y_\beta)-\widetilde{U}^{j}(x))$. From here, the proof follows as in the proof of Proposition \ref{prop:regusch}. 
\end{proof}

\begin{proposition}[Equicontinuity in time]\label{prop:equitime}
Assume \eqref{as:u0f}, \eqref{as:disc}, $p\geq2$,   $r,h,\tau >0$ and \eqref{as:CFL}. Let $U$ be the solution of \eqref{eq:numsch_ext}. Then
\[
\begin{split}
\|U^{j+k}-U^{j}\|_{L^\infty(\R^d)} &\leq \widetilde{K} (t_k)^{\frac{a}{2+(1-a)(p-2)}} +  \|f\|_{L^\infty(\R^d)}t_k=:\overline{\Lambda}_{u_0,f}(t_k),
\end{split}
\]
with 
\begin{equation}\label{eq:ctefam}
\tilde{K} = 4^{\frac{1+(1-a)(p-1)}{2+(1-a)(p-2)}} L_{u_0}^{\frac{p}{2+(1-a)(p-2)}}((p-1)  K_1^{p-2}K_2 M)^\frac{a}{2+(1-a)(p-2)},
\end{equation}
where $M$ comes from assumption \eqref{as:disc}, and $K_1$ and $K_2$ are constants given in Section \ref{sec:ctes} (depending on a certain choice of mollifiers).
\end{proposition}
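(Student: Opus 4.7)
The plan is a discrete comparison argument: bound $U^{j+l}$ (for $l=0,\ldots,k$) from above and below by a spatial mollification of $U^j$, augmented by carefully chosen time-dependent corrections. Fix $j\in\{0,\ldots,N-k\}$ and set $L:=L_{u_0}+TL_f$, so that by Proposition~\ref{prop:regusch} every $U^l$ is globally $a$-H\"older with constant $\leq L$. Let $\rho\in C_c^\infty(\R^d)$ be a standard nonnegative mollifier supported in the unit ball, set $\rho_\delta(y):=\delta^{-d}\rho(y/\delta)$ for a scale $\delta>0$ to be chosen later, and define $\varphi_\delta:=U^j*\rho_\delta$. Routine mollifier identities then yield $|\varphi_\delta(x)-U^j(x)|\leq L\delta^a$, the inherited H\"older bound $|\varphi_\delta(x)-\varphi_\delta(y)|\leq L|x-y|^a$, and the smooth-derivative estimates $\|\nabla\varphi_\delta\|_{L^\infty}\leq LK_1\delta^{a-1}$ and $\|D^2\varphi_\delta\|_{L^\infty}\leq LK_2\delta^{a-2}$, with $K_1,K_2$ depending only on $\rho$ (these are the constants in \eqref{eq:ctefam}).

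The crucial step is an estimate on $\|D_p^h\varphi_\delta\|_{L^\infty}$ that is \emph{uniform in $r$}. A naive bound $|J_p(\varphi_\delta(x+y_\beta)-\varphi_\delta(x))|\leq \|\nabla\varphi_\delta\|_\infty^{p-1}|y_\beta|^{p-1}$ leads to $\sum_\beta\omega_\beta|y_\beta|^{p-1}\leq Mr^{-1}$, which blows up as $r\to 0$. To kill this, I would exploit the symmetry $\omega_\beta=\omega_{-\beta}$ and pair $\beta$ with $-\beta$: setting $a_\beta:=\varphi_\delta(x+y_\beta)-\varphi_\delta(x)$ and $b_\beta:=\varphi_\delta(x-y_\beta)-\varphi_\delta(x)$, a second-order Taylor expansion gives $a_\beta+b_\beta=O(\|D^2\varphi_\delta\|_\infty|y_\beta|^2)$ while $|a_\beta|,|b_\beta|\leq \|\nabla\varphi_\delta\|_\infty|y_\beta|+O(\|D^2\varphi_\delta\|_\infty|y_\beta|^2)$, and combining this with the oddness of $J_p$ together with $|J_p'(\eta)|=(p-1)|\eta|^{p-2}$ yields
\[
|J_p(a_\beta)+J_p(b_\beta)|\leq (p-1)\|\nabla\varphi_\delta\|_\infty^{p-2}\|D^2\varphi_\delta\|_\infty|y_\beta|^p.
\]
Summing against $\omega_\beta$, using the key consequence $\sum_\beta\omega_\beta|y_\beta|^p\leq M$ of \eqref{as:disc} (since $|y_\beta|\leq r$), and substituting the bounds on $\nabla\varphi_\delta$ and $D^2\varphi_\delta$ produces the $r$-free estimate
\[
\|D_p^h\varphi_\delta\|_{L^\infty(\R^d)}\leq (p-1)MK_1^{p-2}K_2\,L^{p-1}\delta^{-(1+(1-a)(p-1))}=:A.
\]

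With this $A$, I construct the upper barrier $W^l(x):=\varphi_\delta(x)+L\delta^a+(A+\|f\|_{L^\infty(\R^d)})(t_l-t_j)$. Since $W^l-\varphi_\delta$ is spatially constant, $D_p^hW^l=D_p^h\varphi_\delta$, and a direct check gives $W^{l+1}-W^l\geq \tau(D_p^hW^l+f)$, i.e.\ $W$ is a discrete supersolution, while $W^0\geq U^j$ by the $L\delta^a$ shift. Applying the Taylor expansion of $J_p$ used in the proof of Proposition~\ref{prop:regusch} to the difference $E^l:=W^l-U^{j+l}$ gives
\[
E^{l+1}(x)\geq E^l(x)\Bigl(1-\tau(p-1)\sum_\beta|\eta_\beta|^{p-2}\omega_\beta\Bigr)+\tau(p-1)\sum_\beta|\eta_\beta|^{p-2}\omega_\beta\, E^l(x+y_\beta),
\]
where $|\eta_\beta|\leq L|y_\beta|^a$ because $\varphi_\delta$ and $U^{j+l}$ share the H\"older constant $L$; the condition \eqref{as:CFL}, whose constant is tuned precisely with an extra H\"older budget $3\widetilde K+1$ to accommodate this kind of barrier, makes the bracket nonnegative, so induction in $l$ yields $E^l\geq 0$ throughout. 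A symmetric lower barrier then gives
\[
\|U^{j+k}-U^j\|_{L^\infty(\R^d)}\leq 2L\delta^a+At_k+\|f\|_{L^\infty(\R^d)}t_k.
\]
A one-variable optimization of $2L\delta^a+At_k$ in $\delta$ sets the critical scale $\delta^{2+(1-a)(p-2)}\sim L^{p-2}t_k$ (using the identity $p-a(p-2)=2+(1-a)(p-2)$), and substituting back rearranges the exponents exactly into $L^{p/(2+(1-a)(p-2))}t_k^{a/(2+(1-a)(p-2))}$, reproducing the form of $\widetilde K$ in \eqref{eq:ctefam} up to an explicit numerical constant.

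The main obstacle is the $r$-uniform bound on $D_p^h\varphi_\delta$: without the $\omega_\beta=\omega_{-\beta}$ pairing one would be forced to tie $\delta$ to $r$, paying a factor of $r^{-(p-1)(1-a)}$ in the final estimate that would destroy all uniformity in the discretization parameters needed for the convergence proof of Theorem~\ref{theo:main}.
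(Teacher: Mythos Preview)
Your approach is correct and shares the same core technical ingredient with the paper---the $r$-uniform bound on $D_p^h$ applied to a mollified function, obtained by pairing $\beta$ with $-\beta$ and using the second-order Taylor expansion together with $\sum_\beta|y_\beta|^p\omega_\beta\leq M$. The paper's proof has exactly this step (its estimate \eqref{eq:Dp}--\eqref{eq:weightest} is your bound on $\|D_p^h\varphi_\delta\|_\infty$). Where you diverge is in the time propagation: instead of building barriers around a mollification of $U^j$, the paper mollifies only the \emph{initial datum} $u_0$, lets $U_\delta$ be the scheme run from $u_{0,\delta}$, observes that $\widetilde U^l_\delta:=U^{l+1}_\delta$ is again a solution of the scheme (with shifted initial data $U^1_\delta$), and invokes the continuous-dependence estimate of Proposition~\ref{prop:contdep} to get $\|U^{j+1}_\delta-U^j_\delta\|_\infty\leq\|U^1_\delta-U^0_\delta\|_\infty\leq\tau\|D_p^hu_{0,\delta}\|_\infty+\tau\|f\|_\infty$ for \emph{every} $j$. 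A telescoping sum plus $\|U^j-U^j_\delta\|_\infty\leq\|u_0-u_{0,\delta}\|_\infty\leq L_{u_0}\delta^a$ then gives the same two-term expression you optimize in $\delta$.

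The payoff of the paper's route is sharper constants: because only $u_0$ is mollified, every occurrence of the H\"older constant is $L_{u_0}$, and the optimization yields exactly the $\widetilde K$ of \eqref{eq:ctefam}. Your barrier approach mollifies $U^j$, whose H\"older constant is $L=L_{u_0}+TL_f$ by Proposition~\ref{prop:regusch}, so your final constant carries $L^{p/(2+(1-a)(p-2))}$ in place of $L_{u_0}^{p/(2+(1-a)(p-2))}$. This is not ``up to an explicit numerical constant'' as you claim---the ratio depends on $L_f$ and $T$---so strictly speaking you prove a variant of the proposition with a larger $\widetilde K$. That said, your estimate is perfectly adequate for the downstream application (Theorem~\ref{theo:main}), and your comparison argument is a legitimate alternative to the continuous-dependence shortcut. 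One minor slip: the symmetrization introduces a factor $\tfrac12$ in front of the sum, so your $A$ should be $\tfrac{p-1}{2}MK_1^{p-2}K_2L^{p-1}\delta^{-(1+(1-a)(p-1))}$, matching the paper's $\widehat K$ up to the $L$ versus $L_{u_0}$ discrepancy.
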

\begin{remark}
 Actually, a close inspection of the proof reveals that for $u_0\in C^2_b(\R^d)$ we can get
\[
\|U^{j+k}-U^{j}\|_{L^\infty(\R^d)} \lesssim t_k.
\]
\end{remark}
\begin{proof}
Consider a mollification of the initial data $u_{0,\delta}=u_0* \rho_\delta$ where  $\rho_\delta(x)$ is a standard mollifier (as defined in  Appendix  \ref{sec:moll}). Let $(U_\delta)^j$ be the corresponding solution of \eqref{eq:numsch_ext} with $u_{0,\delta}$ as initial data. Then,
\[
\|(U_\delta)^1-(U_\delta)^0\|_{L^\infty(\R^d)} \leq \tau\|D_p^hu_{0,\delta}\|_{L^\infty(\R^d)}  + \tau \|f\|_{L^\infty(\R^d)}.
\]
Define $\widetilde{U}^j_\delta:= U^{j+1}_\delta$ for all $j=0,\ldots,N$. Clearly, $\widetilde{U}^j_\delta$ is the unique solution of \eqref{eq:numsch_ext} with initial data $\widetilde{U}^0_\delta=U^{1}_\delta$ and right hand side  $f$. By Proposition \ref{prop:contdep} 
\[
\begin{split}
\|U^{j+1}_\delta-U^j_\delta\|_{L^\infty(\R^d)} &=\|\widetilde{U}^{j}_\delta-U^j_\delta\|_{L^\infty(\R^d)} \leq \|\widetilde{U}^{0}_\delta-U^0_\delta\|_{L^\infty(\R^d)}=\|U^1_\delta-U^0_\delta\|_{L^\infty(\R^d)} \\
&\leq \tau\|D_p^hu_{0,\delta}\|_{L^\infty(\R^d)} + \tau \|f\|_{L^\infty(\R^d)}.
\end{split}
\]
A repeated use of the triangle inequality yields
\begin{equation}\label{eq:triangle}
\begin{split}
\|U^{j+k}_\delta-U^j_\delta\|_{L^\infty(\R^d)}&\leq \sum_{i=0}^{k-1} \|U^{j+i+1}_\delta-U^{j+i}_\delta\|_{L^\infty(\R^d)}\leq(k\tau) \|D_p^hu_{0,\delta}\|_{L^\infty(\R^d)} + (k\tau) \|f\|_{L^\infty(\R^d)}.
\end{split}
\end{equation}
The symmetry of the weights $\omega_\beta$ together with  Lemma \ref{lem:pineq1}  implies 
\begin{equation}\label{eq:Dp}
\begin{split}
|D_p^hu_{0,\delta}(x)|&=\frac{1}{2} \left|\sum_{y_\beta\in \Grid} \left(J_p(u_{0,\delta}(x+y_\beta)-u_{0,\delta}(x))-J_p(u_{0,\delta}(x)-u_{0,\delta}(x-y_\beta))\right) \omega_{\beta}\right|\\
&\leq \frac{p-1}{2} \sum_{y_\beta\in \Grid} 
\max\{|u_{0,\delta}(x+y_\beta)-u_{0,\delta}(x)|,|u_{0,\delta}(x)-u_{0,\delta}(x-y_\beta)|\}^{p-2} \times\\
&
\hspace{5cm}\times\left|u_{0,\delta}(x+y_\beta)+u_{0,\delta}(x-y_\beta)-2u_{0,\delta}(x) \right|\omega_\beta.
\end{split}
\end{equation}
Now note that, by the $a$-H\"older regularity of $u_0$ given by assumption \eqref{as:u0f}, Lemma \ref{lem:a1} and Lemma \ref{lem:a2} imply
\begin{equation}\label{eq:mollest}
|u_{0,\delta}(x\pm y_\beta)-u_{0,\delta}(x)|\leq K_1L_{u_0}\delta^{a-1}|y_\beta|, \quad |u_{0,\delta}(x+y_\beta)+u_{0,\delta}(x-y_\beta)-2u_{0,\delta}(x) |\leq K_2 L_{u_0} \delta^{a-2}|y_\beta|^2,
\end{equation}
where $K_1$ and $K_2$ depend only on the mollifier $\rho$.  Now note that, by \eqref{as:disc}, we have
\begin{equation}\label{eq:weightest}
\sum_{y_\beta\in \Grid} |y_\beta|^p\omega_\beta \leq M.
\end{equation}
Combining \eqref{eq:triangle} and \eqref{eq:weightest}, we obtain
\[
\begin{split}
\|U^{j+k}_\delta-U^j_\delta\|_{L^\infty(\R^d)}&\leq \frac{p-1}{2}  t_k (K_1L_{u_0}\delta^{a-1})^{p-2} K_2 L_{u_0} \delta^{a-2}\sum_{y_\beta\in \Grid} |y_\beta|^p\omega_\beta+ t_k  \|f\|_{L^\infty(\R^d)}\\
&\leq \widehat{K} \delta^{(a-1)(p-2)+(a-2)}t_k+  \|f\|_{L^\infty(\R^d)}t_k,
\end{split}
\]
with $\widehat{K}=\frac{p-1}{2}  K_1^{p-2}K_2 L_{u_0}^{p-1}M$. Using the triangle inequality,  the above estimate  and applying Proposition \ref{prop:contdep}  several times we obtain
\[
\begin{split}
\|U^{j+k}-U^{j}\|_{L^\infty(\R^d)}&\leq \|U^{j+k}-U^{j+k}_\delta\|_{L^\infty(\R^d)}+\|U^{j+k}_\delta-U^{j}_\delta\|_{L^\infty(\R^d)}+\|U^{j}-U^{j}_\delta\|_{L^\infty(\R^d)}\\
&\leq 2\|u_0-u_{0,\delta}\|_{L^\infty(\R^d)}+\widehat{K} \delta^{(a-1)(p-2)+(a-2)}t_k+  \|f\|_{L^\infty(\R^d)}t_k\\
&\leq 2 L_{u_0}\delta^a +\widehat{K} \delta^{(a-1)(p-2)+(a-2)}t_k+   \|f\|_{L^\infty(\R^d)}t_k.
\end{split}
\]
 By choosing  $\delta=(\frac{\widehat{K}}{2L_{u_0}}t_k)^{\frac{1}{2+(1-a)(p-2)}}$ in the above estimate, we get the desired result
\[
\|U^{j+k}-U^{j}\|_{L^\infty(\R^d)} \leq \tilde{K} (t_k)^{\frac{a}{2+(1-a)(p-2)}} +   \|f\|_{L^\infty(\R^d)}t_k,
\]
with 
\[
\begin{split}
\tilde{K}&=4L_{u_0} \left(\frac{\widehat{K}}{2L_{u_0}} \right)^\frac{a}{2+(1-a)(p-2)}\\
&=4^{1-\frac{a}{2+(1-a)(p-2)}}L_{u_0}((p-1)  K_1^{p-2}K_2 L_{u_0}^{p-2}M)^\frac{a}{2+(1-a)(p-2)}\\
&= 4^{\frac{2+(1-a)(p-2) -a}{2+(1-a)(p-2)}} L_{u_0}^{\frac{p}{2+(1-a)(p-2)}}((p-1)  K_1^{p-2}K_2 M)^\frac{a}{2+(1-a)(p-2)}.
\end{split}
\]
\end{proof}
\subsection{Equiboundedness and equicontinuity estimates for a scheme in $\R^d\times[0,T]$}

We now need to extend the numerical scheme in time in a continuous way. This is done by continuous interpolation, i.e., 
\begin{equation}\label{eq:interp}
U(x,t):= \frac{t_{j+1}-t}{\tau} U^j(x)+ \frac{t-t_j}{\tau}U^{j+1}(x) \quad \textup{if} \quad t\in [t_j,t_{j+1}] \quad \textup{for some} \quad j=0,\ldots,N,
\end{equation}
where $U^j$ is the solution of \eqref{eq:numsch_ext}. 
\begin{remark}\label{rem:schemenotimegrid}
It is standard to check that, for all $t\in[t_j,t_{j+1}]$, we have that the original scheme is preserved also outside the grid points, i.e., 
\begin{equation}
U(x,t)=U(x,t_j)+(t-t_j) D_p^h U(x,t_j)+ (t-t_j) f(x).
\end{equation}
\end{remark}

We have the following result.
\begin{proposition}[Stability and equicontinuity]\label{prop:stabcont}
Assume \eqref{as:u0f}, \eqref{as:disc}, $p\geq2$,   $r,h,\tau>0$ and \eqref{as:CFL}. Let $U$ be the solution of \eqref{eq:interp}. Then
\begin{enumerate}[(a)]
\item \emph{(Equiboundedness)} $\|U\|_{L^\infty(\R^N\times[0,T])} \leq  \|u_0\|_{L^\infty(\R^d)} +T \|f\|_{L^\infty(\R^d)} $,
\item \emph{(Equicontinuity)} For any $x,z\in \R^d$ and $t,\tilde{t}\in [0,T]$ we have that
\[
|U(x,t)-U(z,\tilde{t})| \leq \Lambda_{u_0}(|x-z|) +T \Lambda_{f}(|x-z|) + 3 \overline{\Lambda}_{u_0,f}(|\tilde{t}-t|).
\]
\end{enumerate}
\end{proposition}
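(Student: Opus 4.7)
The plan is to deduce both parts from the discrete-in-time estimates already proved for \eqref{eq:numsch_ext}, by exploiting that for $t\in[t_j,t_{j+1}]$ one can write
\[
U(\cdot,t)=\theta\,U^j(\cdot)+(1-\theta)\,U^{j+1}(\cdot),\qquad \theta:=\frac{t_{j+1}-t}{\tau}\in[0,1].
\]
For part (a), the same argument as in Proposition \ref{prop:stab} applies verbatim to the extension \eqref{eq:numsch_ext} on $\R^d$ (only the structure of the scheme is used), yielding $\|U^j\|_{L^\infty(\R^d)}\le\|u_0\|_{L^\infty(\R^d)}+t_j\|f\|_{L^\infty(\R^d)}\le\|u_0\|_{L^\infty(\R^d)}+T\|f\|_{L^\infty(\R^d)}$; this bound is preserved by the convex combination.

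For part (b), I split by the triangle inequality into a purely spatial term $|U(x,t)-U(z,t)|$ and a purely temporal term $|U(z,t)-U(z,\tilde t)|$. The spatial term is handled by writing it as a convex combination of $U^j(x)-U^j(z)$ and $U^{j+1}(x)-U^{j+1}(z)$ and invoking (the $\R^d$-extension of) Proposition \ref{prop:regusch}, using $t_j,t_{j+1}\le T$; this produces the first two terms $\Lambda_{u_0}(|x-z|)+T\,\Lambda_f(|x-z|)$.

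For the temporal term the key structural observation is that $\overline{\Lambda}_{u_0,f}$ is non-decreasing, concave, and vanishes at $0$: it is the sum of the linear piece $\|f\|_{L^\infty}t_k$ and the power $\tilde{K}\,t_k^{a/(2+(1-a)(p-2))}$ whose exponent lies in $(0,1/2]$ (since $a\in(0,1]$ and $p\ge 2$). I then distinguish two cases. \textbf{Case 1:} $t,\tilde t\in[t_j,t_{j+1}]$. Then
\[
U(z,t)-U(z,\tilde t)=\frac{\tilde t-t}{\tau}\bigl(U^j(z)-U^{j+1}(z)\bigr),
\]
and Proposition \ref{prop:equitime} together with concavity of $\overline{\Lambda}_{u_0,f}$ (equivalently, $\overline{\Lambda}_{u_0,f}(\theta s)\ge \theta\,\overline{\Lambda}_{u_0,f}(s)$ for $\theta\in[0,1]$) yield
\[
|U(z,t)-U(z,\tilde t)|\le \frac{|\tilde t-t|}{\tau}\,\overline{\Lambda}_{u_0,f}(\tau)\le \overline{\Lambda}_{u_0,f}(|\tilde t-t|).
\]
\textbf{Case 2:} $t\in[t_j,t_{j+1}]$ and $\tilde t\in[t_k,t_{k+1}]$ with $j<k$. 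I insert the breakpoints $t_{j+1}$ and $t_k$, apply Case 1 to $|U(z,t)-U(z,t_{j+1})|$ and $|U(z,t_k)-U(z,\tilde t)|$, and Proposition \ref{prop:equitime} to $|U(z,t_{j+1})-U(z,t_k)|$. Each of the three time-increments $t_{j+1}-t$, $t_k-t_{j+1}$, $\tilde t-t_k$ is at most $|\tilde t-t|$, so monotonicity of $\overline{\Lambda}_{u_0,f}$ gives the factor $3$.

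The main obstacle is the intra-interval temporal estimate: naively one only obtains $|U(z,t)-U(z,\tilde t)|\le \overline{\Lambda}_{u_0,f}(\tau)$, which is uniform in $|\tilde t-t|$ and hence does not produce equicontinuity as $|\tilde t-t|\to 0$ with $\tau$ fixed. It is precisely the concavity of $\overline{\Lambda}_{u_0,f}$ (guaranteed by the exponent being at most $1/2$) that upgrades this to the desired rate in $|\tilde t-t|$ and allows the use of linear interpolation in time without losing the modulus of continuity obtained at the grid points.
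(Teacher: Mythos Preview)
Your proposal is correct and follows essentially the same approach as the paper's proof. The only variations are cosmetic: for spatial equicontinuity you invoke (the $\R^d$-extension of) Proposition~\ref{prop:regusch} on the convex combination while the paper uses translation invariance together with Proposition~\ref{prop:contdep}, and for the intra-interval temporal estimate your concavity argument for $\overline{\Lambda}_{u_0,f}$ is exactly the paper's observation that $\tau\mapsto\overline{\Lambda}_{u_0,f}(\tau)/\tau$ is decreasing (the two being equivalent since $\overline{\Lambda}_{u_0,f}(0)=0$).
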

\begin{proof}
Equiboundedness follows easily from a continuous in space version of Proposition \ref{prop:stab}, since
\[
\begin{split}
|U(x,t)|&\leq \frac{t_{j+1}-t}{\tau} \sup_{x\in \R^d} |U^j(x)|+ \frac{t-t_j}{\tau} \sup_{x\in \R^d}|U^{j+1}(x)|\\
& \leq \frac{t_{j+1}-t}{\tau} \left(  \|u_0\|_{L^\infty(\R^d)} + T  \|f\|_{L^\infty(\R^d)}\right)+ \frac{t-t_j}{\tau} \left(  \|u_0\|_{L^\infty(\R^d)} + T  \|f\|_{L^\infty(\R^d)}\right)\\
&\leq \|u_0\|_{L^\infty(\R^d)} +T \|f\|_{L^\infty(\R^d)}.
\end{split}
\]
Equicontinuity in space follows from the translation invariance of the scheme and Proposition \ref{prop:contdep}: 
\[
|U(x+y,t)-U(x,t)|\leq \|u_0(\cdot+y)-u_0\|_{L^\infty(\R^d)} +T \|f(\cdot+y)-f\|_{L^\infty(\R^d)}.
\]
To prove equicontinuity in time, we first consider $t,\tilde{t} \in [t_j,t_{j+1}]$ for some $j=0,\ldots,N-1$. In this case we have
\[
\begin{split}
U(x,t)-U(x,\tilde{t})&=\left(\frac{t_{j+1}-t}{\tau} U^j(x)+ \frac{t-t_j}{\tau}U^{j+1}(x)\right)-\left(\frac{t_{j+1}-\tilde{t}}{\tau} U^j(x)+ \frac{\tilde{t}-t_j}{\tau}U^{j+1}(x)\right)\\
&= \frac{t-\tilde{t}}{\tau} \left(U^{j+1}(x)- U^j(x)\right).
\end{split}
\]
Then, from Proposition \ref{prop:equitime}, we get
\[
|U(x,t)-U(x,\tilde{t})|\leq  |t-\tilde{t}| \frac{ \overline{\Lambda}_{u_0,f}(\tau) }{\tau}
\]
Note that the function $g(\tau)=\frac{ \overline{\Lambda}_{u_0,f}(\tau)}{\tau}$
is decreasing. Thus, since $|t-\tilde{t}|\leq \tau$, we have  $g(\tau)\leq  g(|t-\tilde{t}|)$. It follows that
\[
\begin{split}
|U(x,t)-U(x,\tilde{t})|&\leq \overline{\Lambda}_{u_0,f}(|t-\tilde{t}|). 
\end{split}
\]
Now consider $t\in [t_j,t_{j+1})$ and $\tilde{t}\in [t_{j+k},t_{j+k+1})$ for $k\geq1$. By the triangle inequality, the previous step and Proposition \ref{prop:equitime} 
\[
\begin{split}
|U(x,t)-U(x,\tilde{t})|&\leq |U(x,t)-U(x,t_{j+1})|+|U(x,t_{j+k})-U(x,\tilde{t})|+|U(x,t_{j+1})-U(x,t_{j+k})|\\
&\leq \overline{\Lambda}_{u_0,f}(|t_{j+1}-t|) + \overline{\Lambda}_{u_0,f}(|\tilde{t}-t_{j+k}|) +\overline{\Lambda}_{u_0,f}(|t_{j+k}-t_{j+1}|) .
\end{split}
\]
Since $t\leq t_{j+1} \leq \tilde{t}$ and $t\leq t_{j+k} \leq \tilde{t}$, the above estimate yields
\[
|U(x,t)-U(x,\tilde{t})|\leq 3 \overline{\Lambda}_{u_0,f}(|\tilde{t}-t|) .
\]
Finally, we conclude space-time equicontinuity combining the above estimates to get
\[
\begin{split}
|U(x,t)-U(z,\tilde{t})|&\leq |U(x,t)-U(z,t)|+|U(z,t)-U(z,\tilde{t})|\\
&\leq \Lambda_{u_0}(|x-z|) +T \Lambda_{f}(|x-z|) + 3\overline{\Lambda}_{u_0,f}(|\tilde{t}-t|).\qedhere
\end{split}
\]
\end{proof}
By Arzel\`a-Ascoli, we obtain as a corollary that, up to a subsequence, the numerical solution converges  locally uniformly to a limit. 
\begin{corollary}\label{coro:compactness}
Assume the hypotheses of Proposition \ref{prop:stabcont}. Let $\{U_h\}_{h>0}$ be a sequence of solutions of \eqref{eq:interp}. Then, there exist a subsequence $\{U_{h_l}\}_{l=1}^\infty$ and a function $v\in C_b(\R^d\times[0,T])$ such that
\[
U_{h_l}\to v \quad \textup{as} \quad l\to\infty \quad \textup{locally uniformly in $\R^N\times[0,T]$}.
 \]
\end{corollary}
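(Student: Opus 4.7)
The plan is to combine the uniform bounds provided by Proposition \ref{prop:stabcont} with the Arzel\`a--Ascoli theorem and a standard diagonal extraction argument. Part (a) of that proposition gives us
\[
\|U_h\|_{L^\infty(\R^d\times[0,T])} \leq \|u_0\|_{L^\infty(\R^d)} + T\|f\|_{L^\infty(\R^d)}
\]
uniformly in $h$, while part (b) yields a common modulus of continuity
\[
\omega(\delta) := \Lambda_{u_0}(\delta) + T\Lambda_f(\delta) + 3\overline{\Lambda}_{u_0,f}(\delta),
\]
shared by the whole family. Crucially, neither the uniform bound nor $\omega$ depend on $h$, since the quantities entering $\Lambda_{u_0}$, $\Lambda_f$ and $\overline{\Lambda}_{u_0,f}$ (in particular the constant $\tilde{K}$ from \eqref{eq:ctefam}) are determined by $u_0$, $f$, $p$, $d$, $T$ alone.

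Next, I would fix an exhaustion $K_n := \overline{B_n(0)}\times[0,T]$, $n\in\N$, of $\R^d\times[0,T]$ by compact sets. On $K_1$ the Arzel\`a--Ascoli theorem applies by equiboundedness and equicontinuity and produces a subsequence $\{U_{h_l^{(1)}}\}_l$ converging uniformly on $K_1$ to a continuous function. Extracting a further subsequence $\{U_{h_l^{(2)}}\}_l\subset \{U_{h_l^{(1)}}\}_l$ convergent on $K_2$, and iterating, the diagonal subsequence $U_{h_l}:=U_{h_l^{(l)}}$ converges uniformly on every $K_n$, hence locally uniformly on $\R^d\times[0,T]$ to a limit $v$.

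The limit $v$ is bounded by the same constant $\|u_0\|_{L^\infty(\R^d)} + T\|f\|_{L^\infty(\R^d)}$ and continuous, being a local uniform limit of continuous functions; passing to the limit in the equicontinuity estimate moreover shows that $v$ itself satisfies the modulus bound $\omega$, so $v\in C_b(\R^d\times[0,T])$. As this is essentially a routine corollary of Arzel\`a--Ascoli, there is no genuine obstacle: the only mild subtlety is the diagonal extraction needed to upgrade uniform convergence on each $K_n$ to local uniform convergence on the whole of $\R^d\times[0,T]$.
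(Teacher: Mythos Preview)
Your proposal is correct and follows exactly the approach the paper intends: the paper simply states that the corollary follows from Arzel\`a--Ascoli together with the equiboundedness and equicontinuity of Proposition~\ref{prop:stabcont}, and your argument spells out precisely that, including the diagonal extraction to pass from compact exhaustion to local uniform convergence.
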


\section{Convergence of the numerical scheme}

From Corollary \ref{coro:compactness}, we have that the sequence of numerical solutions has a subsequence converging locally uniformly to some function $v$. We will now show that $v$ is a viscosity solution of \eqref{eq:ParabProb}.

\begin{theorem}\label{thm:convergence}
Let the assumptions of Corollary \ref{coro:compactness} hold. Then $v$ is a viscosity solution of \eqref{eq:ParabProb}.
\end{theorem}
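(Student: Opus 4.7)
The plan is a Barles--Souganidis-style argument: combine the locally uniform convergence from Corollary \ref{coro:compactness}, the piecewise-linear time structure of $U_{h_l}$ from Remark \ref{rem:schemenotimegrid}, monotonicity of $J_p$, consistency \eqref{as:cons}, and the CFL budget \eqref{as:CFL} to pass to the limit in the scheme in the viscosity sense. The initial condition $v(\cdot, 0) = u_0$ is immediate from $U_{h_l}(\cdot, 0) = u_0$ and locally uniform convergence. The subsolution case is symmetric to the supersolution case, so I focus on proving that $v$ is a viscosity supersolution on $\R^d \times (0, T)$: fix $\varphi \in C^2_b$ such that $v - \varphi$ has a strict local minimum at some $(x_0, t_0) \in \R^d \times (0, T)$ with value $0$ (reducing to strictness by a quadratic perturbation), and aim at $\partial_t \varphi(x_0, t_0) - \plap \varphi(x_0, t_0) \geq f(x_0)$.

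By standard stability, the locally uniform convergence $U_{h_l} \to v$ produces minimum points $(x_l, t_l) \to (x_0, t_0)$ of $U_{h_l} - \varphi$ in a fixed neighborhood, with minimum value $m_l \to 0$. Let $t_{j_l}$ be the grid time at or just below $t_l$. Since $U_{h_l}(x_l, \cdot)$ is affine on $[t_{j_l}, t_{j_l+1}]$ with slope $\sigma_l := D_p^{h_l} U_{h_l}(x_l, t_{j_l}) + f(x_l)$ by Remark \ref{rem:schemenotimegrid}, minimality in time together with $\varphi \in C^2_b$ shows
\[
a_l \;:=\; U_{h_l}(x_l, t_{j_l}) - \varphi(x_l, t_{j_l}) - m_l \;=\; O(\tau_l^2)
\]
and $\sigma_l \leq \partial_t \varphi(x_l, t_l) + O(\tau_l)$; in particular $\limsup_l \sigma_l \leq \partial_t \varphi(x_0, t_0)$. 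For the spatial part, the minimum property gives $U_{h_l}(y, t_{j_l}) \geq \varphi(y, t_{j_l}) + m_l$ for $y$ in a neighborhood of $x_l$. Setting $\psi_l(y) := \varphi(y, t_{j_l}) + U_{h_l}(x_l, t_{j_l}) - \varphi(x_l, t_{j_l})$, one has $\psi_l(x_l) = U_{h_l}(x_l, t_{j_l})$ and $U_{h_l}(\cdot, t_{j_l}) \geq \psi_l - a_l$, so monotonicity of $J_p$ and a mean value estimate yield
\[
D_p^{h_l} U_{h_l}(x_l, t_{j_l}) \;\geq\; D_p^{h_l} \varphi(\cdot, t_{j_l})(x_l) \;-\; (p-1)\, a_l \sum_{\beta} |\eta_\beta|^{p-2} \omega_\beta,
\]
with $|\eta_\beta| \leq \|\nabla\varphi\|_\infty\, r + a_l$.

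Using \eqref{as:disc} in the form $\sum_\beta \omega_\beta \leq M r^{-p}$, the error term is $O(a_l/r^2)$, and \eqref{as:CFL} then delivers $a_l/r^2 \lesssim \tau_l^2/r^2 \lesssim r^{2 + 2(1-a)(p-2)} \to 0$. Consistency \eqref{as:cons} gives $D_p^{h_l} \varphi(\cdot, t_{j_l})(x_l) \to \plap \varphi(x_0, t_0)$, so combining all estimates yields the viscosity supersolution inequality $\partial_t \varphi(x_0, t_0) - \plap \varphi(x_0, t_0) \geq f(x_0)$. The symmetric touching-from-above argument gives the subsolution property, so $v$ is a viscosity solution, unique by Theorem \ref{teo:main2}. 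The principal technical obstacle is controlling the grid-time discrepancy $a_l$: it is precisely to ensure that $a_l/r^2 \to 0$ that the exponent $2 + (1-a)(p-2)$ appears in \eqref{as:CFL}, balancing the H\"older regularity $a$ of the data against the $(p-2)$ degeneracy in $J_p$.
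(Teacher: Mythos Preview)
Your Barles--Souganidis outline is the right template, but the key step differs from the paper's, and your version has a gap at the grid-point edge case. You claim that minimality in time gives simultaneously $a_l=O(\tau_l^2)$ and $\sigma_l\le\partial_t\varphi(x_l,t_l)+O(\tau_l)$. This is fine when $t_l\in(t_{j_l},t_{j_l+1})$ (then indeed $\sigma_l=\partial_t\varphi(x_l,t_l)$ and Taylor gives $a_l=O(\tau^2)$). But if $t_l$ lands on a grid point $t_{j_l}$, the one-sided condition from the right gives $\sigma_l\ge\partial_t\varphi(x_l,t_l)$, the wrong inequality; switching to the left interval gives $\sigma_l^-\le\partial_t\varphi$ as desired, yet then $a_l'=U_{h_l}(x_l,t_{j_l-1})-\varphi(x_l,t_{j_l-1})-m_l=(\partial_t\varphi-\sigma_l^-)\tau+O(\tau^2)$, which is only $O(\tau)$ without further control on $\sigma_l^-$. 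With $a_l'=O(\tau)$ the error term is $O(\tau/r^2)\le Cr^{(1-a)(p-2)}$ under \eqref{as:CFL}, which does \emph{not} vanish when $p=2$ or $a=1$. So your route, as written, does not close.

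The paper avoids this altogether by a different mechanism. After shifting by $M_h$ so that $\widetilde U_h(x^h,t^h)=\varphi(x^h,t^h)$ and $\widetilde U_h\ge\varphi$, it writes the scheme step as $\varphi(x^h,t^h)=g(\widetilde U_h(x^h,t_j))+(t^h-t_j)f(x^h)$ with
\[
g(\xi)=\xi+(t^h-t_j)\sum_\beta J_p\big(\widetilde U_h(x^h+y_\beta,t_j)-\xi\big)\omega_\beta,
\]
and shows $g'(\xi)\ge0$ on the whole interval $[\varphi(x^h,t_j),\widetilde U_h(x^h,t_j)]$. This monotonicity lets one replace $\widetilde U_h(x^h,t_j)$ by $\varphi(x^h,t_j)$ in one stroke, with no need to quantify the discrepancy $a_l$. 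Crucially, the bound $g'\ge0$ uses the equicontinuity of $U_h$ from Proposition~\ref{prop:stabcont} to estimate $|\widetilde U_h(x^h+y_\beta,t_j)-\xi|$, and this is exactly why the constant $C$ in \eqref{as:CFL} involves $\tilde K$. Your closing remark that the exponent $2+(1-a)(p-2)$ is there ``precisely to ensure that $a_l/r^2\to0$'' is therefore off: in the paper's argument (and already in the regularity estimates) it is there to make the explicit step monotone.
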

\begin{proof} For notational simplicity, we avoid the subindex $j$ and consider 
\[
U_{h}\to v \quad \textup{as} \quad h\to0 \quad \textup{locally uniformly in $\R^N\times[0,T]$}.
\]
First of all, by the local uniform convergence,
\[
v(x,0)=\lim_{h\to0} U_h(x,0)=u_0(x),
\]
locally uniformly. We will now show that $v$ is a viscosity supersolution. The proof that $v$ is a viscosity subsolution is similar. 

Now let $\varphi$ be a suitable test function for $v$ at $(x^*,t^*)\in \R^d\times (0,T)$.  We may assume that $\varphi$ satisfies
\begin{enumerate}[(i)]
\item $\varphi(x^*,t^*)=u(x^*,t^* )$,
\item $u(x,t)>\displaystyle \varphi(x,t)$ for all $(x,t)\in B_R(x^*)\times(t^*-R,t^*]\setminus \left(x^*,t^*\right)$.
\end{enumerate}
The local uniform convergence ensures (see Section 10.1.1 in \cite{Eva98}) that there exists a sequence $\{(x^h,t^h)\}_{h>0}$ such that
\begin{enumerate}[(i)]
\item $\varphi(x^h,t^h)-U_h(x^h,t^h)=\sup_{(x,t)\in B_R(x^h)\times(t^h-R,t^h]}\{\varphi(x,t)-U_h(x,t)\}=:M_h $,
\item  $\varphi(x^h,t^h)-U_h(x^h,t^h)\geq \displaystyle \varphi(x,t)-U_h(x,t)$ for all $(x,t)\in B_R(x^h)\times(t^h-R,t^h]\setminus(x^h,t^h)$
\end{enumerate}
and
\[
(x^h,t^h)\to (x^*,t^*) \quad \textup{as} \quad h\to0.
\]
Now consider $t_j\in \mathcal{T}_{\tau}$ such that $t^h\in[t_j,t_{j+1}]$ (note that the index $j$ might depend on $h$, but this fact plays no role in the proof). By Remark \ref{rem:schemenotimegrid}, 
\[
U_h(x^h,t^h)= U_h(x^h,t_j)+ (t^h-t_j) \sum_{y_\beta \in \Grid} J_p(U_h(x^h+y_\beta,t_j)-U_h(x^h,t_j))\omega_\beta + (t^h-t_j) f(x^h).
\]
Define $\widetilde{U}_h=U_h+M_h$. It is clear that
\[
\widetilde{U}_h(x^h,t^h)= \widetilde{U}_h(x^h,t_j)+ (t^h-t_j) \sum_{y_\beta \in \Grid} J_p(\widetilde{U}_h(x^h+y_\beta,t_j)-\widetilde{U}_h(x^h,t_j))\omega_\beta+(t^h-t_j) f(x^h).
\]
Clearly,  $\widetilde{U}_h(x^h,t^h)=\varphi(x^h,t^h)$ and $\widetilde{U}_h\geq\varphi$, which implies that
\begin{equation}\label{eq:aux1}
\varphi(x^h,t^h)= \widetilde{U}_h(x^h,t_j)+ (t^h-t_j) \sum_{y_\beta \in \Grid} J_p(\widetilde{U}_h(x^h+y_\beta,t_j)-\widetilde{U}_h(x^h,t_j))\omega_\beta+(t^h-t_j) f(x^h).
\end{equation}
Now consider the function $g:\R\to\R$ given by
\[
g(\xi)=\xi+(t^h-t_j) \sum_{y_\beta \in \Grid} J_p(\widetilde{U}(x^h+y_\beta,t_j)-\xi)\omega_\beta
\]
and note that
\[
g'(\xi)=1-(t^h-t_j) (p-1) \sum_{y_\beta \in \Grid} |\widetilde{U}(x^h+y_\beta,t_j)-\xi|^{p-2}\omega_\beta.
\]
We will check now that $g'(\xi)\geq0$ for any $\xi \in [\varphi(x^h,t_j), \widetilde{U}(x^h,t_j)]$. Indeed, 
\[
\begin{split}
 |\widetilde{U}(&x^h+y_\beta,t_j)-\xi| \leq  |\widetilde{U}(x^h+y_\beta,t_j)-\widetilde{U}(x^h,t_j)|+|\widetilde{U}(x^h,t_j)-\xi|\\
 &\leq  |U(x^h+y_\beta,t_j)-U(x^h,t_j)|+|\widetilde{U}(x^h,t_j)-\varphi(x^h,t_j)|\\
&\leq  |U(x^h+y_\beta,t_j)-U(x^h,t_j)|+|U(x^h,t_j)-U(x^h,t^h)|+|\varphi(x^h,t^h)-\varphi(x^h,t_j)|\\
&\leq \Lambda_{u_0}(|y_\beta|) +T \Lambda_{f}(|y_\beta|) + 3 \overline{\Lambda}_{u_0,f}(|t^h-t_j|)+|t^h-t_j| \|\partial_t \varphi\|_{L^\infty(B_R(x^h)\times[t^h-R, t^h])}\\
&\leq  \Lambda_{u_0}(|y_\beta|) +T \Lambda_{f}(|y_\beta|) +  3 \overline{\Lambda}_{u_0,f}(\tau)+\tau \|\partial_t \varphi\|_{L^\infty(B_R(x^h)\times[t^h-R, t^h])},
 \end{split}
\]
where we have used that $\widetilde{U}(x^h,t^h)=\varphi(x^h,t^h)$, Proposition \ref{prop:stabcont} and the fact that $|t^h-t_j|\leq \tau$.  By \eqref{as:CFL}, and taking $\tau$ small enough, we have 
\[
\begin{split}
3 \overline{\Lambda}_{u_0,f}(\tau)&+\tau \|\partial_t \varphi\|_{L^\infty(B_R(x^h)\times[t^h-R, t^h])}\\
& \leq 3\widetilde{K} \tau^{\frac{a}{2+(1-a)(p-2)}} + \left(3\|f\|_{L^\infty(\R^d)} + \|\partial_t \varphi\|_{L^\infty(B_R(x^h)\times[t^h-R, t^h])}\right)\tau\\
& \leq (3\widetilde{K}+1)\tau^{\frac{a}{2+(1-a)(p-2)}}\\
& \leq (3\widetilde{K}+1) r^a.
\end{split}
\]
Thus,
\[ 
\begin{split}
g'(\xi) &\geq 1-(t^h-t_j) (p-1) \sum_{y_\beta \in \Grid} |  \Lambda_{u_0}(|y_\beta|) +T \Lambda_{f}(|y_\beta|) + (3\widetilde{K}+1) r^a|^{p-2}\omega_\beta\\
&\geq 1-\tau(p-1)  (  L_{u_0}+T L_f + 3\widetilde{K}+1)^{p-2}r^{a(p-2)}\sum_{y_\beta \in \Grid}\omega_\beta \\
&\geq 1-\tau \frac{M(p-1)  (  L_{u_0} +T L_{f} + 3\widetilde{K}+1)^{p-2}}{ r^{2+(1-a)(p-2)}}\\
&\geq 0,
\end{split}
\]
where we have used \eqref{as:disc} and where the last inequality is due to the \eqref{as:CFL} condition.
We can use this fact in \eqref{eq:aux1} to get
\[
\begin{split}
\varphi(x^h,t^h)&= \widetilde{U}_h(x^h,t_j)+ (t^h-t_j) \sum_{y_\beta \in \Grid} J_p(\widetilde{U}_h(x^h+y_\beta,t_j)-\widetilde{U}_h(x^h,t_j))\omega_\beta+(t^h-t_j)  f(x^h)\\
&\geq \varphi(x^h,t_j)+ (t^h-t_j)  \sum_{y_\beta \in \Grid} J_p(\widetilde{U}_h(x^h+y_\beta,t_j)-\varphi(x^h,t_j))\omega_\beta+(t^h-t_j)  f(x^h)\\
&\geq \varphi(x^h,t_j)+ (t^h-t_j) \sum_{y_\beta \in \Grid} J_p(\varphi(x^h+y_\beta,t_j)-\varphi(x^h,t_j))\omega_\beta+(t^h-t_j)  f(x^h).
\end{split}
\]
Consistency \eqref{as:cons} yields 
\[
\partial_t\varphi(x^h,t^h) + o(\tau)\geq \plap \varphi(x^h,t_j)+ o_h(1)+f(x^h).
\]
Passing to the limit as $h,\tau\to0$, we get the desired result by the regularity of $\varphi$ and the fact that $t^h,t_j\to t^*$  and $x^h\to x^*$ as $h\to0$.
\end{proof}
We are now ready to prove convergence of the scheme.
\begin{proof}[Proof of Theorem \ref{theo:main}]
By Corollary \ref{coro:compactness} and Theorem \ref{thm:convergence}, we know that, up to a subsequence, the sequence $U_h$ converges to a viscosity solution of \eqref{eq:ParabProb}. Moreover, since viscosity solutions are unique (cf. Theorem \ref{teo:main2}), the whole sequence converges to the same limit.
\end{proof}

\section{Discretizations}\label{sec:discretizations}

In this section, we present two examples of discretizations and verify that the assumptions \eqref{as:cons} and \eqref{as:disc} are satisfied. Moreover, we also give the precise form of corresponding CFL-condition.

\subsection{Discretization in dimension $d=1$} We consider the following finite difference discretization of $\plap$ in dimension $d=1$
\[
D_p^h \phi(x) = \frac{J_p(\phi(x+h)-\phi(x))+J_p(\phi(x-h)-\phi(x))}{h^p}.
\]
A proof of consistency \eqref{as:cons} can be found in Theorem 2.1 in \cite{dTLi20}. Assumption \eqref{as:disc} is trivially true for $r=h$ since
\[
\omega_1=\omega_{-1}= \frac{1}{h^p} \quad \textup{and} \quad \omega_\beta=0 \quad \textup{otherwise}.
\]
so that
\[
\sum_{y_\beta\in \Grid} \omega_\beta=\frac{2}{h^p}
\]
\subsection{Discretization in dimension $d>1$}\label{subsec:disc2}
The following discretization was introduced in \cite{dTLi22}:
\[
D_p^h\phi(x)= \frac{h^d}{\mathcal{D}_{d,p}\, \omega_d\, r^{p+d}} \sum_{y_\beta\in B_r} J_p(\phi(x+y_\beta)-\phi(x)),
\]
where $\omega_d$ denotes the measure of the unit ball in $\R^d$, the relation between $r$ and $h$ is given by 
\begin{equation}\label{eq:hr}
h=\begin{cases}
o(r^\frac{p}{p-1}), & \quad \textup{if} \quad p \in (2,3],\\
o(r^{\frac{3}{2}}),&  \quad \textup{if} \quad p\in (3,\infty),\\
\end{cases}
\end{equation}
and  $\mathcal{D}_{d,p}= \frac{d}{2(d+p)}\fint_{\partial B_1}|y_1|^p\dd \sigma (y)$. When $p\in \N$, a more explicit value of this constant is given in \cite{dTLi22}. In general, the explicit value is given by 
\[
\mathcal{D}_{d,p}
	=
	\frac{d}{4\sqrt\pi}\cdot\frac{p-1}{d+p}\cdot\frac{\Gamma(\frac{d}{2})\Gamma(\frac{p-1}{2})}{\Gamma(\frac{d+p}{2})}.
\]
A proof of consistency \eqref{as:cons} can be found in Theorem 1.1 in \cite{dTLi22}. Assumption \eqref{as:disc} trivially holds for $h=o(r^\alpha)$ for some $\alpha>0$ according to \eqref{eq:hr} since
\[
\omega_\beta=\omega_{-\beta }= \frac{h^d}{\mathcal{D}_{d,p}\, \omega_d\, r^{p+d}} \quad \textup{if} \quad  |h\beta|<r \quad \textup{and} \quad \omega_\beta=0 \quad \textup{otherwise}.
\]
To check \eqref{as:disc} we rely on the following estimate given in the proof of Theorem 1.1 in \cite{dTLi22}:
\[
\sum_{y_\beta\in B_r}h^d \leq |B_{r+\sqrt{d}h}|.
\]
In particular,  taking for example $h\leq r/\sqrt{d}$, we have
\[
\sum_{y_\beta\in B_r} \omega_\beta= \frac{1}{\mathcal{D}_{d,p}r^{p}}\frac{|B_{r+\sqrt{d}h}|}{|B_{r}|}\leq \frac{2^d}{\mathcal{D}_{d,p}r^{p}}.
\]
\section{Numerical experiments}
We will perform the numerical tests comparing the numerical solution with the explicit Barenblatt solution of \eqref{eq:ParabProb}. For $p>2$ this is given by
\[
B(x,t)=K t^{-\alpha}\left(1- \left(\frac{|x|}{t^\beta}\right)^{\frac{p}{p-1}}\right)^{\frac{p-1}{p-2}}_+, 
\]
where the constants are,
\[
\alpha=\frac{d}{d(p-2)+p}, \quad \beta=\frac{1}{d(p-2)+p}, \quad \textup{and} \quad K=\left(\frac{p-2}{p}\beta^{\frac{1}{p-1}}\right)^{{\frac{p-1}{p-2}}}.
\]
\subsection{Simulations in dimension $d=1$}
We consider the initial condition
\[
u_0(x)=K\left(1-|x|^{\frac{p}{p-1}}\right)^{\frac{p-1}{p-2}}
\]
and $f=0$. The corresponding solution of problem \eqref{eq:ParabProb} is given by (see \cite{KaVa88}) 
\[
B(x,t)=K (t+1)^{-\alpha}\left(1- \left(\frac{|x|}{(t+1)^\beta}\right)^{\frac{p}{p-1}}\right)^{\frac{p-1}{p-2}}_+ .
\] 
Let us now comment on the CFL-condition \eqref{as:CFL}.  Clearly, $u_0$ is a Lipschitz function, and we can estimate its Lipschitz constant as follows	
\[
L_{u_0}=\sup_{x\in [-1,1]} \left|\frac{du_0}{dx}(x)\right|=\sup_{r\in[0,1]}\left\{K \frac{p}{p-2}\left(1-r^{\frac{p}{p-1}}\right)^{\frac{1}{p-2}}r^{\frac{1}{p-1}}\right\}\leq K \frac{p}{p-2}.
\]
Thus, for all $p>2$, the CFL condition \eqref{as:CFL} can be take as $\tau\sim h^2$ (since $f=0$ in this case). 
 For completeness, we find the value of $K$ in dimension $d=1$.  Note that
\[
K=\left(\frac{p-2}{p}\frac{1}{(2(p-1))^{\frac{1}{p-1}}}\right)^{{\frac{p-1}{p-2}}}=\left(\frac{p-2}{p}\right)^{{\frac{p-1}{p-2}}}\frac{1}{(2(p-1))^{\frac{1}{p-2}}},
\]
so that
$
L_{u_0}\leq \left(\frac{p-2}{2p(p-1)}\right)^{{\frac{1}{p-2}}}.
$

In Figure \ref{fig:errors222}, we show the numerical errors obtained. As it can be seen there, the errors seem to behave like  $O(h^{p/(p-1)})$.

\begin{figure}[h!]
\includegraphics[width=0.9\textwidth]{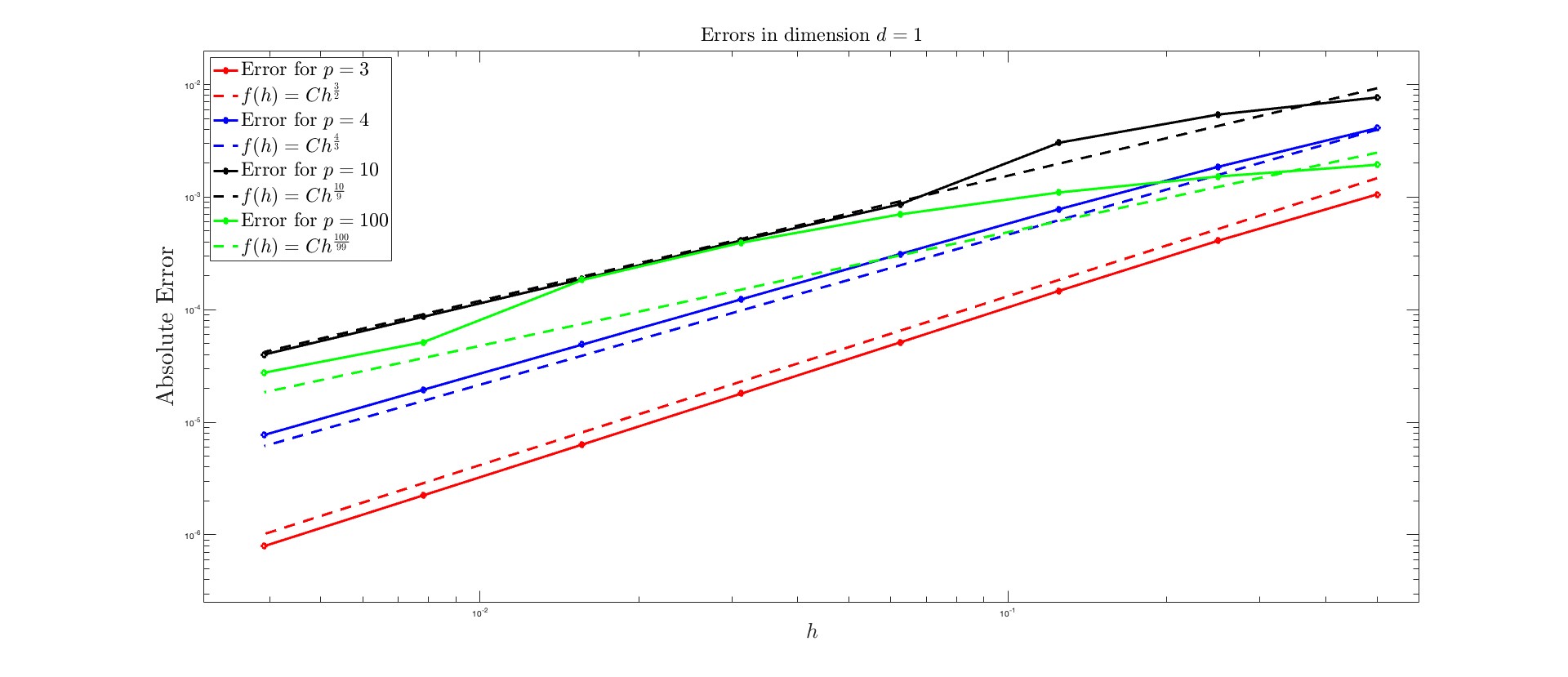}
\caption{Errors in dimension $d=1$ for $p=3,4,10,100$. }
\label{fig:errors222}
\end{figure}

\appendix
\section{Estimates for mollified H\"older continuous functions}\label{sec:moll}
Here we present some explicit estimates for mollifications needed in the proof of  equicontinuity in Lemma \ref{prop:equitime}.  Let $\tau:[0,\infty)\to \R$ be a smooth function such that $\supp\tau\subset [0,1 ]$. Define $\rho:\R^d\to \R$  given by  $\rho(x)=\frac{M}{\omega_d}\tau(|x|)$ where $\omega_d$ is the measure of the unit sphere in dimension $d$ and $M=M(d)$ is a constant defined by
\[
M=\left(\int_0^1 \tau (r) r^{d-1}\dd r\right)^{-1}.
\]
In this way, we have that $\int_{B_1}\rho(x)\,dx = 1$.
For $\delta>0$ define also
$$
\rho_\delta(x)=\frac{1}{\delta^d}\rho\left(\frac{x}{\delta}\right).
$$
Then, for a function $f\in L^1_{\text{loc}}$ we define the mollification of $f$ as
$$
f_\delta(x)=(f*\rho_\delta)(x)=\int_{B_\delta}\rho_\delta(y)f(x-y)\dd y =\int_{\R^n}\rho_\delta(x-y)f(y) \dd y.
$$
The lemma below gives an estimate of the Lipschitz seminorm of $f_\delta$ when $f$ is  an  $\alpha$-H\"older continuous function  for some $\alpha\in (0,1]$.
\begin{lemma} \label{lem:a1}  Let $\alpha\in(0,1]$.  Consider a function $f\in C^\alpha(\R^n)$ with $
|f(x)-f(y)|\leq L|x-y|^\alpha 
$ for all $x,y\in \R^d$. 
 Then, for all $x,y\in \R^d$, we have that
\[|f_\delta(x)-f_\delta(y)|\leq K_1 L|x-y|\delta^{\alpha-1}, \quad \textup{with} \quad K_1= M  \int_0^1|\tau'(r)| r^{d-1}\dd r.
\]
\end{lemma}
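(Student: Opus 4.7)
The plan is to estimate $\nabla f_\delta$ pointwise and then conclude by integrating along a segment, since $f_\delta$ is smooth even though $f$ is merely Hölder.

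First, differentiating under the integral sign in $f_\delta(x)=\int f(y)\rho_\delta(x-y)\dd y$ and using the fact that $\int \rho_\delta(x-y)\dd y = 1$ is constant in $x$, we get $\int \nabla_x\rho_\delta(x-y)\dd y = 0$. This identity lets me insert a free $f(x)$ term, giving the representation
\[
\nabla f_\delta(x) = \int_{\R^d}\bigl(f(y)-f(x)\bigr)\nabla_x\rho_\delta(x-y)\dd y.
\]
This is the key trick: it converts the $\alpha$-Hölder regularity of $f$ into usable size information, at the cost of one derivative on $\rho_\delta$ (which is harmless thanks to smoothness of $\rho$).

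Next I would plug in the Hölder bound and rescale. Using $|f(y)-f(x)|\leq L|x-y|^\alpha$ and the scaling $\nabla_x\rho_\delta(x-y) = \delta^{-d-1}(\nabla\rho)\!\left(\tfrac{x-y}{\delta}\right)$, the change of variables $z=(x-y)/\delta$ produces
\[
|\nabla f_\delta(x)| \leq L\,\delta^{\alpha-1}\int_{B_1}|z|^\alpha\,|\nabla\rho(z)|\dd z.
\]
Since $\rho(z)=\tfrac{M}{\omega_d}\tau(|z|)$ is radial with $|\nabla\rho(z)|=\tfrac{M}{\omega_d}|\tau'(|z|)|$, polar coordinates reduce the integral to $M\int_0^1 r^\alpha|\tau'(r)|r^{d-1}\dd r$, and since $r^\alpha\leq 1$ on $[0,1]$, this is bounded by $K_1 = M\int_0^1|\tau'(r)|r^{d-1}\dd r$.

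Finally, since $f_\delta\in C^\infty$, the mean value inequality applied along the segment from $x$ to $y$ yields
\[
|f_\delta(x)-f_\delta(y)| \leq \|\nabla f_\delta\|_{L^\infty(\R^d)}\,|x-y| \leq K_1 L\,\delta^{\alpha-1}\,|x-y|,
\]
which is the claim. There is no real obstacle here; the only point requiring attention is the zero-mean identity for $\nabla\rho_\delta$, which is what avoids a naive bound of the form $L|x-y|^\alpha$ (useless for the Lipschitz seminorm) and instead trades Hölder regularity of $f$ for a factor $\delta^{\alpha-1}$ coming from the scaling of $\nabla\rho_\delta$.
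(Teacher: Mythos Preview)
Your proof is correct and follows essentially the same route as the paper: both use the zero-mean identity $\int\nabla\rho_\delta=0$ to insert $f(x)$, bound by the H\"older condition, rescale to extract $\delta^{\alpha-1}$, pass to polar coordinates to obtain $M\int_0^1|\tau'(r)|r^{d-1}\dd r$ (using $r^\alpha\le 1$), and finish with the mean value inequality. The only cosmetic difference is the order in which the rescaling and the trivial bound $|y|^\alpha\le\delta^\alpha$ (equivalently $r^\alpha\le 1$) are applied.
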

\begin{proof}
Since $  \int_{\R^n}\nabla \rho_\delta(y)\,dy=\int_{\R^n}\nabla \rho(y)\,dy= 0
$,
it follows that 
\begin{equation}
\begin{split}
|\nabla f_\delta (x)|&=\left|\int_{B_\delta}\nabla \rho_\delta(y)\left(f(x-y)-f(x)\right) \dd y\right|\leq L\int_{B_\delta}\left|\nabla \rho_\delta(y)\right||y|^\alpha \dd y\\
&\leq \frac{M}{\omega_d} L \delta^\alpha \int_{B_\delta}\frac{1}{\delta^{d+1}}\left|\tau'\left(\frac{|y|}{\delta}\right)\right| \dd y=M L \delta^{\alpha-1} \int_0^1|\tau'(r)| r^{d-1}\dd r.
\end{split}
\end{equation}
Thus,
\[
|f_\delta(x)-f_\delta(y)|\leq \|\nabla f_\delta\|_{L^\infty(\R^d)}|x-y|=K_1L|x-y|\delta^{\alpha-1}.\qedhere
\]
\end{proof}

The lemma below gives an estimate of the second order central  difference  quotients of $f_\delta$ when $f$ is  an  $\alpha$-H\"older continuous function  for some $\alpha \in(0,1]$. 
 \begin{lemma} \label{lem:a2}  Let $\alpha\in(0,1]$.  Consider a function $f\in C^\alpha(\R^n)$ with $
|f(x)-f(y)|\leq L|x-y|^\alpha 
$ for all $x,y\in \R^d$. 
 Then, for all $x,y\in \R^d$, we have that$$
|f_\delta(x+y)+f_\delta(x-y)-2f_\delta(x)|\leq K_2L|y|^2\delta^{\alpha-2},
$$
with
$$
 K_2=  M\int_0^1\left(\frac{|\tau'(r)|}{r} +|\tau''(r)|\right)r^{d-1}\dd r .
$$
\end{lemma}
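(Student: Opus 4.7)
The plan is to reduce the estimate to a bound on the Hessian of $f_\delta$ via Taylor's theorem, and then bound $\|D^2 f_\delta\|_{L^\infty}$ using the same trick as in the proof of Lemma \ref{lem:a1}: exploiting that the integral of $D^2\rho_\delta$ vanishes so that H\"older continuity of $f$ can be brought to bear. Concretely, by Taylor's formula with integral remainder,
\[
f_\delta(x \pm y) = f_\delta(x) \pm \nabla f_\delta(x)\cdot y + \int_0^1 (1-s)\, y^T D^2 f_\delta(x\pm sy)\, y\, \dd s,
\]
so summing and applying the triangle inequality gives $|f_\delta(x+y)+f_\delta(x-y)-2f_\delta(x)| \leq |y|^2 \|D^2 f_\delta\|_{L^\infty(\R^d)}$ (understood via the quadratic form).

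Next I would write
\[
y_0^T D^2 f_\delta(x)\, y_0 = \int_{B_\delta} (y_0^T D^2 \rho_\delta(z)\, y_0)\, f(x-z)\, \dd z
\]
and, since $\rho_\delta$ is compactly supported, $\int D^2\rho_\delta\, \dd z = 0$, so we may subtract $f(x)$ inside the integral and bound $|f(x-z)-f(x)| \leq L|z|^\alpha$, obtaining
\[
|y_0^T D^2 f_\delta(x)\, y_0| \leq L\int_{B_\delta} |y_0^T D^2 \rho_\delta(z)\, y_0|\, |z|^\alpha\, \dd z.
\]

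A direct computation on $\rho(z) = \frac{M}{\omega_d}\tau(|z|)$ gives
\[
y_0^T D^2 \rho(z)\, y_0 = \frac{M}{\omega_d}\left(\tau''(|z|)\frac{(y_0\cdot z)^2}{|z|^2} + \frac{\tau'(|z|)}{|z|}\left(|y_0|^2 - \frac{(y_0\cdot z)^2}{|z|^2}\right)\right),
\]
so Cauchy--Schwarz, rescaling $\rho \mapsto \rho_\delta$, and the change of variables $z=\delta w$ yield
\[
|y_0^T D^2 f_\delta(x)\, y_0| \leq L |y_0|^2 M \delta^{\alpha-2}\int_0^1 \left(|\tau''(r)| + \frac{|\tau'(r)|}{r}\right) r^{d-1+\alpha}\, \dd r.
\]
Since $r^\alpha \leq 1$ on $[0,1]$, the remaining integral is bounded by $K_2/M$, which closes the estimate with the desired constant.

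The computation is essentially routine; the only delicate step is verifying that the $1/|z|$ and $1/|z|^3$ singularities appearing in $D^2\rho_\delta$ are in fact integrable against $|z|^\alpha$ over $B_\delta$. This is why I pair the $|\tau'|/|z|$ term with $r^{d-1+\alpha}$ in polar coordinates, producing $r^{d-2+\alpha}$, which is integrable near $0$ provided $d \geq 2$ (and trivially for $d=1$ since $\tau'(0)=0$ by symmetric smoothness of $\rho$). Upper bounding $r^\alpha$ by $1$ gives precisely the constant $K_2$ stated in the lemma.
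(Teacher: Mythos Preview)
Your proof is correct and follows essentially the same approach as the paper: bound the second-order symmetric difference by $|y|^2\|D^2 f_\delta\|_{L^\infty}$ via Taylor, then use $\int D^2\rho_\delta=0$ to subtract $f(x)$ and apply H\"older continuity, and finally compute the Hessian of the radial mollifier explicitly and pass to polar coordinates. The only cosmetic differences are that the paper bounds $|z|^\alpha\leq\delta^\alpha$ before changing variables (whereas you carry $r^\alpha$ through and bound it by $1$ at the end), and the paper uses the Lagrange form of the Taylor remainder rather than the integral form; your remark on integrability at the origin in $d=1$ is a nice extra check that the paper leaves implicit.
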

\begin{proof}
 We note that we have the following formula for the second order derivatives of $\rho_\delta$: 
$$
\partial_{ij} \rho_\delta(y)=\left(\frac{\delta_{ij}}{|y|}-\frac{y_i y_j}{|y|^3}\right)\frac{1}{\delta^{d+1}}\tau'\left(\frac{|y|}{\delta}\right)	+\frac{y_i y_j}{\delta^{d+2}|y|^2}\tau''\left(\frac{|y|}{\delta}\right),
$$
so that
\[
\langle D^2 \rho_\delta (y) \xi,\xi\rangle \leq \left(\frac{1}{\delta^{d+1}|y|}\left|\tau'\left(\frac{|y|}{\delta}\right)\right|+\frac{1}{\delta^{d+2}}\left|\tau''\left(\frac{|y|}{\delta}\right)\right|\right)|\xi|^2.
\] 
Similarly to the gradient, the Hessian also integrates to zero, that is, 
$$
\int_{\R^n}\partial_{ij} \rho_\delta(y)\,\dd y = 0\quad \textup{for all} \quad i,j=1,\ldots ,d.
$$
Indeed, when $i\neq j$, the result follows by antisymmetry in $y$.  When $i=j$, we are integrating $\partial_{ii} \rho_\delta$,
which yields zero since $\partial_i \rho_\delta=0$ on $\R^d \setminus B_\delta$.
 As in the proof of the previous lemma, it  follows that 
\begin{equation}
\begin{split}
\|D^2 f_\delta\|&=\left\|\int_{B_\delta}D^2 \rho_\delta(y)\left(f(x-y)-f(x)\right) dy\right\|\leq L\int_{B_\delta}\|D^2 \rho_\delta(y)\||y|^\alpha \dd y\\
&\leq \frac{M}{\omega_d} L \delta^{\alpha}  \int_{B_\delta}\left(\frac{1}{\delta^{d+1}|y|}\left|\tau'\left(\frac{|y|}{\delta}\right)\right|+\frac{1}{\delta^{d+2}}\left|\tau''\left(\frac{|y|}{\delta}\right)\right|\right)
\dd y\\
&=ML \delta^{\alpha-2} \int_0^1\left(\frac{|\tau'(r)|}{r} +|\tau''(r)|\right)r^{d-1}\dd r,
\end{split}
\end{equation}
where $\|\cdot \|$ denotes the operator norm. 
Now, by Taylor expansion
\[
f_\delta(x \pm y)=f_\delta\phi(x)\pm \nabla f_\delta(x)\cdot y+ \frac{1}{2}\sum_{\beta=1} \frac{\partial^{|\beta|}f_\delta}{\partial x_\beta}( z^{\pm}) y^\beta.
\]
Thus,
\[
\begin{split}
|f_\delta(x + y)+f_\delta(x - y)-2f_\delta(x)|&\leq |y|^2 \| D^2 f_\delta (z) \|\\
& \leq M \left(\int_0^1\left(\frac{|\tau'(r)|}{r} +|\tau''(r)|\right)r^{d-1}\dd r \right) L \delta^{\alpha-2}|y|^2.\qedhere
\end{split}
\]
\end{proof}

\subsection{Explicit constants in dimensions one, two and three}
\label{sec:ctes}
Here we  will compute explicit constants for the mollifier that is based on the choice 
$$
\tau (r)= e^{-\frac{1}{1-r^2}}\chi_{ [0,1)}(r).
$$
\noindent {\it In one dimension:} We have
$$
M = \left(\int_0^1 \tau (r) \dd r\right)^{-1}\leq 4.51
$$
and
\[
K_1=M\int_0^1 |\tau'(r)|\dd r = M\int_0^1(- \tau'(r)) \dd r=M \tau(0)=\frac{M}{e}\leq 1.67.
\]
Since 
\[
\int_0^1 \frac{|\tau'(r)|}{r}=2\int_0^1 \frac{e^{-\frac{1}{1-r^2}}}{(1-r^2)^2}\dd r \leq 0.8,
\]
and
\[
\int_0^1 |\tau''(r)|dr=\int_0^1 \left|e^{-\frac{1}{1-r^2}}\frac{6r^4-2}{(1-r^2)^4}\right|\dd r\leq 1.6,
\]
we conclude that 
$$
K_2\leq 2.4M\leq 10.83.
$$

\noindent {\it In two dimensions:} We have
$$
M = \left(\int_0^1 \tau (r) r\dd r\right)^{-1}\leq 13.47
$$
and
\[
K_1=M\int_0^1 |\tau'(r)|r\dd r = M\int_0^1(- \tau'(r)r) \dd r=M\int_0^1\tau(r) \dd r\leq 0.23M\leq 3.13.
\]
Since 
\[
\int_0^1 |\tau'(r)| \dd r=\frac{ 1}{e}
\]
and
\[
\int_0^1 |\tau''(r)| r dr=\int_0^1 \left|e^{-\frac{1}{1-r^2}}\frac{6r^4-2}{(1-r^2)^4}r \right|\dd r\leq 1.04,
\]
we conclude that 
$$
K_2\leq M(e^{-1}+1.04)\leq  18.97.
$$
\noindent  {\it In three dimensions:} We have
$$
M = \left(\int_0^1 \tau (r) r^2\dd r\right)^{-1}\leq 28.49
$$
and
\[
K_1=M\int_0^1 |\tau'(r)|r^2\dd r = M\int_0^1(- \tau'(r)r^2) \dd r=2M\int_0^1\tau(r)r \dd r\leq 2\times 0.08 M\leq 4.56.
\]
Since 
\[
\int_0^1 |\tau'(r)|r\dd r=\int_0^1\tau(r) \dd r\leq 0.23
\]
and
\[
\int_0^1 |\tau''(r)| r^2 dr=\int_0^1 \left|e^{-\frac{1}{1-r^2}}\frac{6r^4-2}{(1-r^2)^4}r^2 \right|\dd r\leq 0.79,
\]
we conclude that 
$$
K_2\leq M(0.23+0.79)\leq 29.06.
$$
\section{Pointwise inequalities} 
 The following lemma follows from the Taylor expansion of the function $t\mapsto |t|^{p-2}t$.
\begin{lemma}{\label{lem:pineq1}}
Let $p\geq 2$. Then 
$$
\Big||a+b|^{p-2}(a+b)-|a|^{p-2}a\Big|\leq (p-1)\max( |a|,|a+b|)^{p-2}|b|.
$$
\end{lemma}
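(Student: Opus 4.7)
The plan is to apply the mean value theorem to the scalar function $\varphi(t) = |t|^{p-2}t = J_p(t)$. For $p \geq 2$, this function is continuously differentiable on $\R$ with derivative $\varphi'(t) = (p-1)|t|^{p-2}$ (at $t=0$ this is interpreted as $0$ when $p>2$, and the $p=2$ case is the trivial linear identity). The hint in the excerpt—that the lemma ``follows from the Taylor expansion of $t \mapsto |t|^{p-2}t$''—points directly at this.

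First, I would apply the mean value theorem to $\varphi$ on the interval with endpoints $a$ and $a+b$ to obtain a point $\xi$ lying between $a$ and $a+b$ such that
\[
|a+b|^{p-2}(a+b) - |a|^{p-2}a \;=\; \varphi(a+b) - \varphi(a) \;=\; \varphi'(\xi)\, b \;=\; (p-1)|\xi|^{p-2}\, b.
\]
Taking absolute values already yields $(p-1)|\xi|^{p-2}|b|$ on the right.

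Second, I would control $|\xi|^{p-2}$ by $\max(|a|,|a+b|)^{p-2}$. Since $\xi$ lies between $a$ and $a+b$, we can write $\xi = (1-s)a + s(a+b)$ for some $s\in[0,1]$, and the triangle inequality gives $|\xi| \leq (1-s)|a| + s|a+b| \leq \max(|a|, |a+b|)$. Because $p-2 \geq 0$, the map $r \mapsto r^{p-2}$ is non-decreasing on $[0,\infty)$, so $|\xi|^{p-2} \leq \max(|a|,|a+b|)^{p-2}$, and combining with the previous display gives the claim.

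There is essentially no obstacle here; the only mildly delicate point is the differentiability of $\varphi$ at the origin, which is harmless since for $p>2$ the derivative $(p-1)|t|^{p-2}$ is continuous (and vanishes at $0$), and the $p=2$ case reduces to the identity $|(a+b)-a|=|b|$. An equivalent integral formulation $\varphi(a+b)-\varphi(a) = (p-1)\int_0^1 |a+sb|^{p-2}\,ds\cdot b$ would lead to the same bound via $\max_{s\in[0,1]}|a+sb| \leq \max(|a|,|a+b|)$, should one prefer to avoid invoking the mean value theorem directly.
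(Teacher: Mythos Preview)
Your proof is correct and follows exactly the approach indicated in the paper, which simply states that the lemma ``follows from the Taylor expansion of the function $t\mapsto |t|^{p-2}t$'' without giving further details. Your argument via the mean value theorem (equivalently, first-order Taylor with Lagrange remainder) is precisely what is meant, and your handling of the bound $|\xi|\leq \max(|a|,|a+b|)$ and the edge cases $p=2$ and $\xi=0$ is clean.
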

 \bigskip
{\bf Acknowledgment:} F\'elix del Teso was supported by the Spanish Government through PGC2018-094522-B-I00, and RYC2020-029589-I funded by the MICIN/AEI. Erik Lindgren was supported by the Swedish Research Council, 2017-03736.
\bigskip

\bibliographystyle{abbrv}



\end{document}